\newtheorem{thm}{Theorem}[section]
\newtheorem{cor}[thm]{Corollary}
\newtheorem{prop}[thm]{Proposition}
\newtheorem{lem}[thm]{Lemma}
\newtheorem{quest}[thm]{Question}
\theoremstyle{definition}
\newtheorem{defn}[thm]{Definition}
\newtheorem{exmp}[thm]{Example}
\newtheorem{notn}[thm]{Notation}
\theoremstyle{remark}
\newtheorem{rem}[thm]{Remark}
\newcommand{\cR}{\color{red}}
\newcommand{\cB}{\color{blue}}
\newcommand\makebig[2]{
\@xp\newcommand\@xp*\csname#1\endcsname{\bBigg@{#2}}
\@xp\newcommand\@xp*\csname#1l\endcsname{\@xp\mathopen\csname#1\endcsname}
\@xp\newcommand\@xp*\csname#1r\endcsname{\@xp\mathclose\csname#1\endcsname}
}
\let\c@equation\c@thm
\numberwithin{equation}{section}
\newcounter{saveexample}
\newcounter{savedthmcounter}
\newcounter{tobecontinuedthmcounter}
\def\@extractthmnum#1.#2{#2}
\newcommand{\extractthmnum}[2]{
\setcounter{#1}{0}
\ifcsname r@#2\endcsname
\edef\@tempa{csname r@#2\endcsname}
\edef\@tempa{\expandafter\@firstoftwo\@tempa}
\setcounter{#1}{\expandafter\@extractthmnum\@tempa}
\fi
}
\title{The Benson - Symonds invariant for Ordinary and signed permutation modules}
\author{Aparna Upadhyay}
\address{Department of Mathematics\\ University at Buffalo, SUNY \\
244 Mathematics Building\\Buffalo, NY~14260, USA}
\email{aparnaup@buffalo.edu}
\date{14 August 2020}
\subjclass[2010]{Primary 20C30, 20C20, Secondary 05E10}
\keywords{Symmetric group, Signed Permutation module, tensor product}
\begin{document}

\begin{abstract}

The signed permutation modules are a simultaneous generalization of the ordinary permutation modules and the twisted permutation modules of the symmetric group. In a recent paper Dave Benson and Peter Symonds defined a new invariant $\gamma_G(M)$ for a finite dimensional module $M$ of a finite group $G$ which attempts to quantify how close a module is to being projective. In this paper, we determine this invariant for all the signed permutation modules of the symmetric group using tools from representation theory and combinatorics.\\
\smallskip

\smallskip


\end{abstract}

\maketitle


\section{\textbf{Introduction}}

Throughout this paper, let $\mathcal{S}_n$ denote the symmetric group. Suppose $\lambda \vdash a$ and $\mu \vdash b$ with corresponding Young subgroups $\mathcal{S}_\lambda$, $\mathcal{S}_\mu$ where $n=a+b$. Let $\mathbf{k}$ be a field of characteristic $p$. Define the signed permutation module by:
$$M^{(\lambda|\mu)} \cong \operatorname{Ind}_{\mathcal{S}_\lambda \times \mathcal{S}_\mu}^{\mathcal{S}_n} \mathbf{k} \boxtimes \operatorname{sgn}.$$

These modules are a simultaneous generalization of the ordinary permutation modules $M^\tau \cong M^{(\tau|\phi)}$ and the twisted permutation modules $M^\tau \otimes \operatorname{sgn} \cong M^{(\phi|\tau)}$. We determine the invariant for $M^{(\lambda|\mu)}$ as defined by Dave Benson and Peter Symonds in \cite{B_S}. This invariant is not yet known for ordinary permutation modules or twisted permutation modules. We determined the invariant for permutation modules of the symmetric group corresponding to two-part partitions in \cite{Aparna_BS}. In this paper we will prove \cite[Conjecture 6.1]{Aparna_BS} and answer \cite[Question 6.2]{Aparna_BS}. We will call this invariant the Benson - Symonds invariant and it will be denoted by $\gamma$. \\

\begin{defn}\cite{B_S} For a $kG$-module $M$, we write $M=M'\oplus (proj)$ where $M'$ has no projective direct summands and $(proj)$ denotes a projective module. Then $M'$ is called the $core$ of $M$ and denoted $core_G(M)$. We write $c_n^G(M)$ for the dimension of $core_G(M^{\otimes n})$.\\
\end{defn}

\begin{thm} \cite{B_S} If $M$ is a finite dimensional module of a finite group $G$ then $$\lim_{n \to \infty} \sqrt[n]{c_n^G(M)}$$ exists.
\end{thm}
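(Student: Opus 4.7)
The plan is to reduce the existence of the limit to Fekete's subadditive lemma by establishing the submultiplicative bound
$$c_{m+n}^G(M) \le c_m^G(M)\, c_n^G(M)$$
for all $m,n \ge 1$. Once this is in place, the sequence $a_n := \log c_n^G(M)$ is subadditive, so $a_n/n$ converges to $\inf_n a_n/n$, which is exactly the statement that $\sqrt[n]{c_n^G(M)}$ has a limit.

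To prove submultiplicativity, the key input is the standard fact that if $P$ is a projective $\mathbf{k}G$-module and $N$ is any finite dimensional $\mathbf{k}G$-module, then $P \otimes_{\mathbf{k}} N$ (with diagonal $G$-action) is again projective. This follows because $P$ is a summand of a free module and the twist $g \otimes n \mapsto g \otimes g^{-1}n$ identifies $\mathbf{k}G \otimes_{\mathbf{k}} N$ (diagonal action) with $\mathbf{k}G \otimes_{\mathbf{k}} N^{\mathrm{triv}}$, which is free of rank $\dim N$. Now I would decompose $M^{\otimes m} = C_m \oplus P_m$ and $M^{\otimes n} = C_n \oplus P_n$ with $C_m, C_n$ the respective cores and $P_m, P_n$ projective. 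Expanding the tensor product gives
$$M^{\otimes(m+n)} \cong (C_m \otimes C_n) \oplus \bigl(C_m \otimes P_n \;\oplus\; P_m \otimes C_n \;\oplus\; P_m \otimes P_n\bigr),$$
and the three summands on the right are projective by the fact just recalled. Writing further $C_m \otimes C_n = C \oplus P$ with $C$ its core and $P$ projective, the Krull--Schmidt theorem gives $core_G(M^{\otimes(m+n)}) \cong C$, so
$$c_{m+n}^G(M) = \dim C \le \dim(C_m \otimes C_n) = c_m^G(M)\, c_n^G(M),$$
which is the bound we want.

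The argument is short and there is no deep obstacle: the main technical inputs are the projectivity of $P \otimes N$ and the uniqueness statement in Krull--Schmidt, both of which are standard in the modular representation theory of finite groups. Once the submultiplicativity inequality is assembled from these pieces, Fekete's lemma closes the proof and, as a bonus, identifies the limit with the infimum $\inf_n \sqrt[n]{c_n^G(M)}$.
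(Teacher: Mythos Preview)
The paper does not give its own proof of this theorem: it is quoted verbatim from \cite{B_S} and used as background. So there is nothing in the present paper to compare your argument against.

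That said, your proposal is correct and is precisely the argument Benson and Symonds use in the cited source: one shows the submultiplicativity $c_{m+n}^G(M)\le c_m^G(M)\,c_n^G(M)$ from the fact that tensoring with a projective yields a projective (together with Krull--Schmidt), and then invokes Fekete's lemma on $\log c_n^G(M)$. The only edge case worth a sentence is when some $c_n^G(M)=0$: then $M^{\otimes n}$ is projective, hence so is $M^{\otimes m}$ for all $m\ge n$, and the limit is $0$. With that remark your write-up is complete and matches the original.
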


Benson and Symonds define the invariant $\gamma_G(M)$ to be the above limit. The authors remark in \cite{B_S} that this invariant is difficult to compute, but captures interesting asymptotic properties of tensor products. It can also be used to measure the non-projective proportion of $M^{\otimes n}$ in the limit. This invariant is not known except for some examples the authors themselves computed using the computer algebra system \textbf{Magma} \cite{magma}. In this paper, we will see a closed formula for the invariant of an infinite class of modules.
\smallskip

\begin{rem} \label{count} The invariant $\gamma_G(M)$ is robust, in that $c_n^G(M)$ may be replaced by the number of indecomposable summands of $core_G(M^{\otimes n})$. Moreover, if the total number of non-isomorphic indecomposable summands that ever occur in a decomposition of $core_G(M^{\otimes n})$ is finite and a particular summand has the highest multiplicity for each $n$ then $c_n^G(M)$ can be replaced by the multiplicity of that particular summand.\\
\end{rem}

\begin{rem}\label{finitesummands} In the case of ordinary permutation modules, we know from \cite{KErdmannYoung} that the indecomposable summands of permutation modules are Young modules. More generally, we also know from the work of Donkin \cite{Donkinschur} that the indecomposable summands of the signed permutation modules are the signed Young modules which are labelled by pair of partitions $(\lambda|p\cdot\nu)$. As a consequence of Mackey's theorem, tensor product of (signed) Young modules decomposes into a direct sum of (signed) Young modules. Hence, only finitely many summands occur in tensor powers of (signed) permutation modules.\\
\end{rem}

\begin{notn} \label{notations}  Throughout this paper $n=kp+a_0$  for  $0 \leq a_0 < p$.
Let $\lambda=(\lambda_1,\lambda_2,...,\lambda_r)\vdash a $ and $\mu=(\mu_1,\mu_2,...,\mu_s) \vdash b$ with $a+b=n$ and $M=M^{(\lambda|\mu)}$. A $(\lambda|\mu)$-\textit{tableau} is a tableau of shape $(\lambda_1,\lambda_2,...,\lambda_r,\mu_1,\mu_2,...,\mu_s)$ with entries $1,2,...,n$ and a horizontal line separating the $\lambda$ and $\mu$ part. For example 

\begin{center}
\begin{ytableau}
\none[2] & \none [7] & \none[4]\\
\none[9] & \none[5]\\
\none[6] \\
\end{ytableau} \\
\rule{1.5cm}{0.4pt} \\
\begin{ytableau}
\none[1] & \none [8] & \none[3]  \\
\none[10]\\
\end{ytableau}
\end{center}

is a $(3,2,1|3,1)$-tableau. \\

\end{notn}

We aim to prove the following theorem which will also settle \cite[Conjecture 6.1]{Aparna_BS}:\\
\begin{thm} \label{myresult} Let $\lambda$ and $\mu$ be as in Notation \ref{notations} and $\mathbf{k}$ be a field of characteristic $p$, then
$$\gamma_{\mathcal{S}_n}(M^{(\lambda|\mu)})={n-p \choose \lambda_1-p,...\lambda_r,\mu_1,...,\mu_s}+{n-p \choose \lambda_1,\lambda_2-p,...,\lambda_r, \mu_1,...,\mu_s}+...+{n-p \choose \lambda_1,...,\lambda_r,\mu_1,...,\mu_s-p}.$$
\end{thm}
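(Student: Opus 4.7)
The strategy is to invoke Remark \ref{count} and reduce the computation of $\gamma_{\mathcal{S}_n}(M^{(\lambda|\mu)})$ to tracking the growth of the multiplicity of a single well-chosen indecomposable summand in the tensor powers $M^{(\lambda|\mu)\otimes N}$ as $N\to\infty$. By Donkin's classification and Mackey's theorem (Remark \ref{finitesummands}), every tensor power decomposes into signed Young modules $Y^{(\alpha|p\nu)}$, and only finitely many isomorphism types occur across all $N$. The first task is to exhibit a dominant signed Young module $Y_0$ whose multiplicity in $M^{\otimes N}$ dominates asymptotically. The shape of the right-hand side of Theorem \ref{myresult} --- each summand is a multinomial of total size $n-p$ obtained by deleting a $p$-block from one part of $(\lambda|\mu)$ --- strongly hints that $Y_0$ is labeled by a pair obtained from $(\lambda|\mu)$ by absorbing a length-$p$ row into the signed component, so that ``removing $p$ from part $i$'' in the formula records which part of $(\lambda|\mu)$ donates that block in the recurrence below.

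Having identified $Y_0$, I would establish a recurrence of the form $m_{N+1} = c\cdot m_N + (\text{lower order})$, where $m_N$ is the multiplicity of $Y_0$ in $M^{\otimes N}$ and $c$ is the claimed sum of multinomial coefficients. The combinatorial heart is the Mackey description of tensor products of induced modules, interpreted through the $(\lambda|\mu)$-tableau combinatorics of Notation \ref{notations}: to find $Y_0$ inside $Y_0\otimes M^{(\lambda|\mu)}$, one distributes the $n$ symbols of the new $M^{(\lambda|\mu)}$ across the rows of $Y_0$ in a way compatible with the signed part, and the $i$-th summand of Theorem \ref{myresult} counts precisely the distributions in which the incoming length-$p$ block sits in the $i$-th part of $(\lambda|\mu)$. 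This is the expected generalization of the two-part permutation-module calculation of \cite{Aparna_BS}.

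To finish, one must show that no other summand grows faster than $c^N$. The Green ring of signed Young modules, modulo projectives and restricted to those appearing in powers of $M^{(\lambda|\mu)}$, is a finite-dimensional nonnegative algebra on which tensoring by $M^{(\lambda|\mu)}$ defines a nonnegative linear operator; Perron--Frobenius then guarantees a common exponential growth rate which, by Remark \ref{count}, must equal $\gamma_{\mathcal{S}_n}(M^{(\lambda|\mu)})$. Combining the lower bound $m_N\gtrsim c^N$ from the recurrence with a direct tableau-theoretic majorization showing that every other summand's multiplicity is bounded by a constant multiple of $m_N$ forces $\gamma_{\mathcal{S}_n}(M^{(\lambda|\mu)})=c$.

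The main obstacle is twofold: (i) correctly pinpointing $Y_0$ and understanding how the signed part $p\nu$ interacts with the Mackey combinatorics so that the recurrence produces \emph{exactly} the sum in Theorem \ref{myresult}; and (ii) the competitor-ruling-out step, where one must verify that the Perron eigenvalue of the multiplication operator is realized on $Y_0$, not on some other signed Young module with a different $p$-core structure. Both are combinatorial in flavor and are where the bulk of the technical tableau work will be concentrated.
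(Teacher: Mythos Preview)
Your proposal and the paper take genuinely different routes, and the difference is the whole point.

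The paper never works with signed Young modules at all. Instead it uses the Benson--Symonds reduction $\gamma_{\mathcal{S}_n}(M)=\max_E\gamma_E(M)$ (Theorem~\ref{maxElt}) to pass to the specific elementary abelian $p$-subgroup $P=\langle\tau_1,\dots,\tau_k\rangle$ generated by $k$ disjoint $p$-cycles. Over $P$ the restricted module $M_P$ breaks into cyclic summands $\langle t\rangle$, one per $P$-orbit of tabloids, and each $\langle t\rangle$ is isomorphic to $\mathbf{k}Q$ for the subgroup $Q\le P$ generated by the $\tau_i$ whose blocks are ``split'' by $t$. Tensor products of such summands are then trivial to analyze (Lemma~\ref{dimen}, Corollary~\ref{whenproj}), and $\gamma_P(M)$ turns out to be exactly the number of tabloids in which the last block $B_k$ lies in a single row --- which is the multinomial sum in the theorem. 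A short comparison with general maximal $E$ (Section~5) shows $P$ realizes the maximum.

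Your approach, by contrast, stays at the $\mathcal{S}_n$ level and attempts to track multiplicities of a dominant signed Young module $Y_0$ through a Mackey/Perron--Frobenius argument. The obstacles you flag are real and, as things stand, fatal: you never pin down $Y_0$ (your description ``absorbing a length-$p$ row into the signed component'' does not determine a pair $(\alpha\mid p\nu)$, and there is no a priori reason a single $Y_0$ achieves the top multiplicity for all $N$); the recurrence $m_{N+1}=c\,m_N+\cdots$ requires the decomposition of $Y_0\otimes M^{(\lambda|\mu)}$ into signed Young modules, for which no effective combinatorial rule is available; and the competitor-ruling-out step would require enough control over signed $p$-Kostka numbers to compare growth rates of all summands simultaneously. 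The paper's restriction to $P$ sidesteps every one of these difficulties: over $P$ the indecomposable summands and their tensor products are completely explicit, so no Perron--Frobenius, no identification of a mysterious $Y_0$, and no structure theory of signed Young modules is needed. That reduction is the missing idea in your proposal.
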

\smallskip
\smallskip

More generally, we will prove the following theorem:\\
\begin{thm} Let $M^{(\lambda|\mu)}$ be a signed permutation module of the symmetric group $\mathcal{S}_n$ over a field of characteristic $p$. Let $n=kp+a_0$ with $a_0<p$ and $P$ be a rank $k$ elementary abelian $p$-subgroup of $\mathcal{S}_n$ generated by $k$ different $p$-cycles. Let $M_P$ denote the restriction of $M^{(\lambda|\mu)}$ to $P$. We write $M_P=N \oplus Q$, where $N$ is the maximal possible summand of $M_P$ which is not $p$-faithful. Then, $\gamma_{\mathcal{S}_n}(M^{(\lambda|\mu)})$ is equal to the dimension of $N$.
\end{thm}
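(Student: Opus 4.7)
The plan is to analyze $M_P$ explicitly and then translate the answer from $P$ back up to $\mathcal{S}_n$ via Green correspondence. A useful preliminary observation is that the sign representation of $\mathcal{S}_\mu$ restricts trivially to any $p$-cycle (since $(-1)^{p-1}=1$ for odd $p$, and sign is automatically trivial in characteristic $2$); hence $M_P$ agrees with the restriction of the ordinary permutation module on the concatenated composition, and the sign twist plays no role at the level of $P$. Applying Mackey's theorem to the inclusions $P\leq\mathcal{S}_n$ and $\mathcal{S}_\lambda\times\mathcal{S}_\mu\leq\mathcal{S}_n$, I would decompose $M_P$ into summands $\operatorname{Ind}_{H_T}^P k$ indexed by $(P,\mathcal{S}_\lambda\times\mathcal{S}_\mu)$-double cosets, where $H_T=\prod_{i\in T}P_i$ records which of the $k$ standard $p$-cycles lie inside the relevant conjugate of the Young subgroup. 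Summands with $T=\emptyset$ assemble into the projective part $Q$; summands with $T\neq\emptyset$ assemble into $N$. A combinatorial count of double cosets, organized by which $p$-cycles are absorbed into which parts of $\lambda$ or $\mu$, should identify $\dim N$ with the multinomial sum of Theorem \ref{myresult} and give that theorem as a corollary.

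The crux is then to prove $\gamma_{\mathcal{S}_n}(M)=\dim N$. A direct computation of $\gamma_P(M_P)$ does not suffice: that quantity can be strictly smaller than $\dim N$, because non-projective $\mathcal{S}_n$-indecomposables can become $P$-projective under restriction (for instance in $M^{(p,p,p)}$ with $p=2$ one computes $\gamma_P(M_P)=10$ while $\dim N=18$, matching Theorem \ref{myresult}). Instead, by Remarks \ref{count} and \ref{finitesummands}, the invariant equals the exponential growth rate of the multiplicity of a single dominant signed Young module summand in $M^{\otimes n}$. I would identify this dominant summand via Green correspondence and show its multiplicity in $M^{\otimes n}$ is asymptotic to $(\dim N)^n$. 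For the lower bound, one expands $N^{\otimes n}$ and traces each resulting term back through Green correspondence at the vertex $P$: the non-faithful summands $\operatorname{Ind}_{H_T}^{P}k$ for different $T$ contribute coherently to the Brauer quotient and hence to the same Green correspondent class, so their contributions sum to $(\dim N)^n$ rather than the smaller rate detected over $P$ alone.

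The principal obstacle is the upper bound: showing that no signed Young module appears in $M^{\otimes n}$ with multiplicity growing faster than $(\dim N)^n$. This does not reduce to a bound over $P$, for the reason noted above. I anticipate handling it by induction on $n$, using the finiteness from Remark \ref{finitesummands} together with a careful tracking of how each signed Young module decomposes under restriction to $P$ and reinduction. A fixed-point principle for the Brauer quotient at $P$ should then bound the total non-projective content of $M^{\otimes n}$ as an $\mathcal{S}_n$-module by $(\dim N)^n$ up to subexponential factors, completing the argument and recovering Theorem \ref{myresult} as a combinatorial reformulation of $\dim N$.
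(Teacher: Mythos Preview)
Your proposal rests on a mistaken premise. You assert that $\gamma_P(M_P)$ can be strictly smaller than $\dim N$, and you build the entire Green-correspondence machinery to bridge that gap. But no such gap exists: in fact $\gamma_P(M_P)=\dim N$ on the nose, and this is precisely what the paper proves.

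The point you are missing is the meaning of ``maximal summand which is not $p$-faithful''. This means there is a \emph{single} element $\tau\in P$ of order $p$ acting trivially on \emph{all} of $N$ simultaneously (the paper's proof makes this explicit: ``Let $\tau\in P$ be an element of order $p$ that acts trivially on $N$''). Consequently $\tau$ acts trivially on every $N^{\otimes n}$, so $N^{\otimes n}$ has no free (hence no projective) $\mathbf{k}P$-summand, and $c_n^P(N)=(\dim N)^n$. Since $N$ is a direct summand of $M_P$, this already gives $\gamma_P(M_P)\geq\gamma_P(N)=\dim N$. Your worry that ``non-projective $\mathcal{S}_n$-indecomposables can become $P$-projective under restriction'' is beside the point: the lower bound is witnessed entirely inside $P$.

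Your example is miscomputed. For $M^{(2,2,2)}$ at $p=2$ one has $P=\langle\tau_1,\tau_2,\tau_3\rangle$ with $\tau_i$ the three transpositions; the orbit decomposition gives $M_P\cong 6\,\mathbf{k}\oplus 3\,A^{\{1,2\}}\oplus 3\,A^{\{1,3\}}\oplus 3\,A^{\{2,3\}}\oplus 6\,\mathbf{k}P$, where $A^{I}$ is the regular representation of $\prod_{i\in I}\langle\tau_i\rangle$ inflated to $P$. Taking $\tau=\tau_3$, the summand $N$ on which $\tau_3$ acts trivially is $6\,\mathbf{k}\oplus 3\,A^{\{1,2\}}$, of dimension $6+12=18$. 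The paper's Theorem~\ref{maxinvt} (with $J_0$ indexing exactly these summands) gives $\gamma_P(M_P)=6\cdot 1+3\cdot 4=18$ as well. Both numbers are $18$, not $10$ and $18$.

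The paper's route is therefore far simpler than yours. First, the explicit orbit analysis of Section~3 and the tensor computation of Theorem~\ref{maxinvt} give $\gamma_P(M_P)=\dim N$ directly. Second, Section~5 shows that for any other maximal elementary abelian $E\leq\mathcal{S}_n$ one has $\gamma_E(M)\leq\gamma_P(M)$, because the smallest $E$-orbit on $\{1,\dots,n\}$ has size at least $p=|B_k|$, so fewer tabloids are fixed. Theorem~\ref{maxElt} then yields $\gamma_{\mathcal{S}_n}(M)=\gamma_P(M)=\dim N$. No Green correspondence, no Brauer quotients, and no induction on $n$ are required; the ``principal obstacle'' you identify does not arise.
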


In the next section, we briefly summarize the fundamental results that will be used in the course of this paper. In section 3, we fix $P$ an elementary abelian $p$-subgroup of $\mathcal{S}_n$ of rank $k$ and consider the restriction of $M$ to $P$. We then investigate the indecomposable summands of this restriction and also determine the multiplicities of each indecomposable summand. We will also understand the properties of these indecomposable summands and their tensor products with each other. In section 4, we will compute the Benson - Symonds invariant for the restriction of $M$ to $P$. Finally in section 5, we will show that the Benson - Symonds invariant for $M$ is in fact equal to the Benson - Symonds invariant for $M$ restricted to $P$. In section 6, we put together some interesting observations and directions for further investigation.\\

\section{\textbf{Preliminaries}}

Given a $(\lambda|\mu)$-tableau $t$, let $R_t \leq \mathcal{S}_n$ be the row-stabilizer of $t$. A $(\lambda|\mu)$-tabloid $\{t\}$ is an equivalence class of tableaux under the action of $R_t$. 
\\

\begin{defn} Let $t$ be a $(\lambda|\mu)$-tableau and $R_{t,\mu}$ denote a subgroup of $R_t$ which only permutes the rows of $\mu$. Let $\rho \in R_{t,\mu}$ be such that $\rho t$ has increasing rows in its $\mu$ part. Define:
$$\epsilon(t):=\operatorname{sgn} \rho.$$
\end{defn}

\smallskip

\begin{defn}\label{action} Let $t$ be a $(\lambda|\mu)$-tableau, $\pi \in \mathcal{S}_n$. Define a signed action $\circ$ of $\mathcal{S}_n$ by:
$$ \pi \circ \{t\} = \epsilon(t) \epsilon(\pi t) \{\pi t\}$$
where $\pi t$ is just the usual action of $\mathcal{S}_n$ acting on the entries of $t$.\\
\end{defn}

We will now see that this action is well-defined and affords a module isomorphic to $M^{(\lambda|\mu)}$.\\
\begin{lem}\label{welldefined}  Let $t$ be a $\lambda|\mu$-tableau, $\rho \in R_{t,\mu}$, $\pi \in \mathcal{S}_n$. Then: $$\operatorname{sgn} (\rho)\epsilon(\pi t)=\epsilon(\pi \rho t).$$
\end{lem}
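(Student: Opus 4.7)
The plan is to reduce the identity to the following cleaner statement: for any $(\lambda|\mu)$-tableau $s$ and any $\sigma \in R_{s,\mu}$, one has $\epsilon(\sigma s) = \operatorname{sgn}(\sigma)\,\epsilon(s)$. Once this is in hand, the lemma follows by a one-line bookkeeping: write $\pi\rho t = (\pi\rho\pi^{-1})(\pi t)$, set $s = \pi t$ and $\sigma = \pi\rho\pi^{-1}$, and note that $\sigma\in R_{\pi t,\mu}=\pi R_{t,\mu}\pi^{-1}$ and $\operatorname{sgn}(\sigma)=\operatorname{sgn}(\rho)$, so $\epsilon(\pi\rho t)=\operatorname{sgn}(\rho)\,\epsilon(\pi t)$.

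To prove the reduced statement, I would first observe that $R_{\sigma s,\mu} = R_{s,\mu}$ whenever $\sigma \in R_{s,\mu}$. Indeed, $\sigma$ only rearranges entries within each $\mu$-row of $s$, so the sets of entries appearing in each $\mu$-row of $\sigma s$ agree with those of $s$; hence the subgroup of $\mathcal{S}_n$ permuting entries within $\mu$-rows is the same for both tableaux. Next, by definition there is a unique $\rho_s\in R_{s,\mu}$ with $\rho_s s$ having increasing $\mu$-rows, and $\epsilon(s)=\operatorname{sgn}(\rho_s)$. Uniqueness comes from the fact that $R_{s,\mu}$ acts freely and transitively on row-orderings of the $\mu$-part.

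Now the sorting element for $\sigma s$ is $\rho_s\sigma^{-1}$: it lies in $R_{s,\mu}=R_{\sigma s,\mu}$, and $(\rho_s\sigma^{-1})(\sigma s)=\rho_s s$ already has increasing $\mu$-rows. Therefore
\[
\epsilon(\sigma s)=\operatorname{sgn}(\rho_s\sigma^{-1})=\operatorname{sgn}(\rho_s)\operatorname{sgn}(\sigma)=\operatorname{sgn}(\sigma)\,\epsilon(s),
\]
as required.

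There is no real obstacle here; the only point to be careful about is conflating the row-stabilizer of $t$ with that of $\pi t$. The conjugation identity $R_{\pi t,\mu}=\pi R_{t,\mu}\pi^{-1}$ is what makes the reduction work, and the invariance $R_{\sigma s,\mu}=R_{s,\mu}$ for $\sigma\in R_{s,\mu}$ is the technical lemma that makes the sorting element transform correctly under left multiplication by $\sigma$.
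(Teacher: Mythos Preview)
Your proof is correct and is essentially the same as the paper's: both use $R_{\pi t,\mu}=\pi R_{t,\mu}\pi^{-1}$ and the invariance $R_{\sigma s,\mu}=R_{s,\mu}$ to identify the sorting element for $\pi\rho t$ as $\rho_s(\pi\rho\pi^{-1})^{-1}$ (which, unwinding notation, is exactly the paper's $\pi\alpha\rho^{-1}\pi^{-1}$). Your presentation is slightly tidier in that it isolates the special case $\pi=e$ first and then reduces the general case to it by conjugation, whereas the paper carries out the same computation in one pass.
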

\begin{proof}
Choose $\sigma \in R_{\pi t,\mu}$ such that $\sigma \pi t$ is $\mu$-row standard. Then $\epsilon(\pi t)=\operatorname{sgn}(\sigma)$ by definition. But $R_{\pi t,\mu}=\pi R_{t,\mu}\pi ^{-1}$ so:
\begin{equation} \label{2.4}
\sigma= \pi \alpha \pi ^{-1} \text{ for some } \alpha \in R_{t,\mu}.
\end{equation}
Since $\rho \in R_{t,\mu}$ therefore $R_{t,\mu}=R_{\rho t,\mu}$ and $\alpha \rho ^{-1} \in R_{\rho t,\mu}$. Thus $$\pi \alpha \rho ^{-1} \pi ^{-1} \in R_{\pi\rho t,\mu}.$$
But: 
\begin{eqnarray*}
\pi \alpha \rho ^{-1} \pi ^{-1} \pi \rho t &=& \pi \alpha t\\
&=& \sigma \pi t \text{ by } (\ref{2.4})
\end{eqnarray*}
is $\mu$-row standard. So by the definition of $\epsilon$:
\begin{eqnarray*}
\epsilon(\pi \rho t) &=& \operatorname{sgn} (\pi \alpha \rho^{-1}\pi^{-1})\\
&=&\operatorname{sgn}(\alpha) \operatorname{sgn}(\rho)\\
&=&\operatorname{sgn} (\sigma) \operatorname{sgn}(\rho) \text{ by } (\ref{2.4})\\
&=&\epsilon(\pi t) \operatorname{sgn}(\rho).
\end{eqnarray*}
\end{proof}
As a consequence, the action $\circ$ is well-defined:\\
\begin{lem} Suppose $\{s\}=\{t\}$ is a $(\lambda|\mu)$-tabloid and $\pi \in \mathcal{S}_n$. Then $\pi \circ \{s\}=\pi \circ \{t\}$.
\end{lem}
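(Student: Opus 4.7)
The plan is to reduce the claim to Lemma \ref{welldefined} by carefully separating the action of the row stabilizer into its $\lambda$ and $\mu$ components. Since $\{s\} = \{t\}$, I can write $s = \tau t$ for some $\tau$ in the row stabilizer $R_t$. Because $R_t = R_{t,\lambda} \times R_{t,\mu}$ (rows of the $\lambda$ part and rows of the $\mu$ part are disjoint), I factor $\tau = \rho \rho'$ with $\rho \in R_{t,\mu}$ and $\rho' \in R_{t,\lambda}$.

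Next I would verify that $\{\pi s\} = \{\pi t\}$, which follows from $\pi s = \pi \tau \pi^{-1} \pi t$ together with $\pi \tau \pi^{-1} \in R_{\pi t}$. So the tabloids on the right sides of $\pi \circ \{s\}$ and $\pi \circ \{t\}$ agree, and the whole problem reduces to showing the scalar $\epsilon(s)\epsilon(\pi s)$ equals $\epsilon(t)\epsilon(\pi t)$.

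For this, the key observation is that $\rho' \in R_{t,\lambda}$ fixes every entry lying in the $\mu$ part of $t$, so $\rho' t$ and $t$ are literally identical on the $\mu$ part, and more generally $\pi \rho' t$ and $\pi t$ are identical on the $\mu$ part. Hence $\epsilon(\rho' t) = \epsilon(t)$ and $\epsilon(\pi \rho' t) = \epsilon(\pi t)$. Now I apply Lemma \ref{welldefined} twice, once with tableau $\rho' t$ and permutation $\pi = e$, and once with tableau $\rho' t$ and permutation $\pi$ (noting $R_{\rho' t, \mu} = R_{t,\mu}$ so $\rho$ still lies in the required subgroup). This gives
\[
\epsilon(s) = \epsilon(\rho \rho' t) = \operatorname{sgn}(\rho)\,\epsilon(\rho' t) = \operatorname{sgn}(\rho)\,\epsilon(t),
\]
\[
\epsilon(\pi s) = \epsilon(\pi \rho \rho' t) = \operatorname{sgn}(\rho)\,\epsilon(\pi \rho' t) = \operatorname{sgn}(\rho)\,\epsilon(\pi t).
\]
Multiplying these two equalities, the factor $\operatorname{sgn}(\rho)^2 = 1$ drops out, yielding $\epsilon(s)\epsilon(\pi s) = \epsilon(t)\epsilon(\pi t)$ and hence $\pi \circ \{s\} = \pi \circ \{t\}$.

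There is no real obstacle here; the only delicate point is keeping straight that $\rho'$ genuinely has no effect on either $\epsilon(t)$ or $\epsilon(\pi t)$, since $\rho'$ only permutes entries within rows of the $\lambda$ part, leaving the $\mu$ rows untouched pointwise (not merely setwise). Once that is clear, Lemma \ref{welldefined} absorbs the only nontrivial work.
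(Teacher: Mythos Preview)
Your proof is correct and follows essentially the same route as the paper's: both arguments reduce to two applications of Lemma~\ref{welldefined} (once with $\pi$ and once with $\pi=e$) so that the factor $\operatorname{sgn}(\rho)^2$ cancels. The only cosmetic difference is that the paper writes ``we can assume without loss of generality that $\rho \in R_{t,\mu}$'' and skips the factorization $\tau=\rho\rho'$, whereas you spell out explicitly why the $R_{t,\lambda}$-component $\rho'$ leaves $\epsilon$ unchanged.
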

\begin{proof}
Suppose $\{t\}=\{\rho t\}$, where we can assume without loss of generality that $\rho \in R_{t,\mu}$, and let $\pi \in \mathcal{S}_n$. Then:
\begin{eqnarray*}
\pi \circ \{\rho t\} &=& \epsilon(\rho t)\epsilon(\pi\rho t)\{\pi \rho t\} \text{ by definition}.\\
&=&\epsilon(\rho t)\epsilon(\pi\rho t)\{\pi  t\} \text{ since }\rho \in R_t\\
&=&\epsilon(\rho t)\operatorname{sgn}(\rho) \epsilon(\pi t)\{\pi  t\} \text{ by Lemma }\ref{welldefined} \\
&=&\operatorname{sgn}(\rho) \epsilon(t)\operatorname{sgn}(\rho) \epsilon(\pi t)\{\pi  t\} \text{ setting } \pi =e \text{ in Lemma } \ref{welldefined} \\
&=&\epsilon(t) \epsilon(\pi t)\{\pi  t\}  \\
&=& \pi \circ \{t\}
\end{eqnarray*}
\end{proof}

We can now show:
\begin{prop} Definition \ref{action} makes the vector space spanned by all $(\lambda|\mu)$-tabloids into a $\mathbf{k}\mathcal{S}_n$ module isomorphic to $M^{(\lambda|\mu)}$.
\end{prop}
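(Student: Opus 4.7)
The plan has two parts: verify that the signed action $\circ$ turns the $\mathbf{k}$-span $V$ of all $(\lambda|\mu)$-tabloids into a $\mathbf{k}\mathcal{S}_n$-module, and then exhibit an explicit isomorphism $\phi: M^{(\lambda|\mu)} \to V$. Well-definedness on tabloids is already the content of Lemma \ref{welldefined}. The identity axiom is immediate from $\epsilon(t)^2 = 1$, and associativity is a short calculation using $\epsilon = \pm 1$:
$$\sigma \circ (\pi \circ \{t\}) = \epsilon(t)\epsilon(\pi t)\bigl(\epsilon(\pi t)\epsilon(\sigma\pi t)\{\sigma\pi t\}\bigr) = \epsilon(t)\epsilon(\sigma\pi t)\{\sigma\pi t\} = (\sigma\pi) \circ \{t\}.$$

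For the isomorphism, I would fix a reference tableau $t_0$ with entries $1,\dots,a$ filling the $\lambda$-rows and $a+1,\dots,n$ filling the $\mu$-rows, both in increasing order; then $R_{t_0} = \mathcal{S}_\lambda \times \mathcal{S}_\mu$ and $\epsilon(t_0) = 1$. Writing $M^{(\lambda|\mu)} = \mathbf{k}\mathcal{S}_n \otimes_{\mathbf{k}(\mathcal{S}_\lambda \times \mathcal{S}_\mu)}(\mathbf{k} \boxtimes \operatorname{sgn})$, define
$$\phi(\pi \otimes 1) := \epsilon(\pi t_0)\{\pi t_0\}.$$
The delicate step is to verify $\phi$ is well-defined on the tensor product, i.e.\ for $\rho = \alpha\beta \in \mathcal{S}_\lambda \times \mathcal{S}_\mu$ one has $\phi(\pi\rho \otimes 1) = \operatorname{sgn}(\beta)\phi(\pi \otimes 1)$. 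This rests on the key observation that $\epsilon(t)$ depends only on the arrangement of entries in the $\mu$-rows of $t$. Since $\alpha$ fixes the entries $a+1,\dots,n$ pointwise, the $\mu$-parts of $\pi\alpha\beta t_0$ and $\pi\beta t_0$ coincide, whence $\epsilon(\pi\alpha\beta t_0) = \epsilon(\pi\beta t_0)$; Lemma \ref{welldefined} applied with $t = t_0$ and $\rho = \beta$ then gives $\epsilon(\pi\beta t_0) = \operatorname{sgn}(\beta)\epsilon(\pi t_0)$. Combined with $\{\pi\rho t_0\} = \{\pi t_0\}$, the required identity follows.

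That $\phi$ intertwines the $\mathcal{S}_n$-action is essentially the same calculation as the associativity check above. Bijectivity then follows from comparing dimensions (both equal $[\mathcal{S}_n : \mathcal{S}_\lambda \times \mathcal{S}_\mu]$) together with surjectivity, which is immediate because every tabloid has the form $\{\pi t_0\}$ and $\epsilon(\pi t_0) = \pm 1$ is invertible. The only subtle point I anticipate is the tensor-product well-definedness, where the $\mathcal{S}_\lambda$-factor of $R_{t_0}$ must be shown to interact trivially with the sign bookkeeping; this is precisely what the observation about $\epsilon$ depending only on the $\mu$-part resolves.
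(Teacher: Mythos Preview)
Your argument is correct. The associativity computation and the explicit map $\phi(\pi\otimes 1)=\epsilon(\pi t_0)\{\pi t_0\}$ are sound, and your handling of the $\mathcal{S}_\lambda$-factor (via the observation that $\epsilon$ depends only on the $\mu$-rows) together with Lemma~\ref{welldefined} for the $\mathcal{S}_\mu$-factor is exactly what is needed for well-definedness over the tensor product.

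The paper takes a slightly different, more indirect route to the isomorphism. After the same associativity check, it observes that the module $V$ is cyclic, generated by the tabloid $t_0$ with $1,\dots,n$ in successive rows, and that the line $\mathbf{k}\{t_0\}$ is an $\mathcal{S}_\lambda\times\mathcal{S}_\mu$-submodule isomorphic to $\mathbf{k}\boxtimes\operatorname{sgn}$; the universal property of induction (cited from Alperin) then forces $V$ to be a quotient of $M^{(\lambda|\mu)}$, and equality of dimensions finishes the job. In effect the paper proves the existence of your map $\phi$ abstractly rather than writing it down. Your approach has the advantage of being self-contained and making the isomorphism explicit, at the cost of having to verify tensor-product well-definedness by hand; the paper's approach is shorter but relies on an external reference for the characterisation of induced modules as cyclic modules with a prescribed generating line.
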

\begin{proof}
Calculating that $\pi_1 \circ (\pi_2\circ \{t\})=(\pi_1\pi_2)\circ \{t\}$ is straightforward from the definition of $\circ$. Now observe that our module has the correct dimension and is cyclic, generated by, for example, the tabloid with $1,2,...,n$ in order in successive rows. This generator spans a one-dimensional $\mathcal{S}_\lambda \times \mathcal{S}_\mu$-submodule isomorphic to $\mathbf{k} \boxtimes \operatorname{sgn}$, which implies \cite[p.56]{alperinlocal} the module is isomorphic to $\operatorname{Ind}_{\mathcal{S}_\lambda \times \mathcal{S}_\mu}^{\mathcal{S}_n} \mathbf{k} \boxtimes \operatorname{sgn}$.
\end{proof}

This signed action $\circ$ makes the set of all $(\lambda|\mu)$-tabloids a basis for $M^{(\lambda|\mu)}$ which therefore has the dimension equal to $${n \choose \lambda_1,...,\lambda_r,\mu_1,...,\mu_s}.  $$\\

\begin{notn}\label{signtabloid} As mentioned before in Notation \ref{notations}, a $(\lambda|\mu)$-tabloid is denoted by using a horizontal line to separate the $\lambda$ and $\mu$ part. For notational convenience, throughout this paper we will use two different colors to denote the entries of the $\lambda$ part and the $\mu$ part. For example \\
$$
\ytableausetup{boxsize=normal,tabloids}
\ytableaushort{{\cR 2}{\cR 7}{\cR 4}, {\cR 9}{\cR 5}, {\cR 6}, {\cB 1}{\cB 8}{\cB 3}, {\cB 10}} $$
is a $(3,2,1|3,1)$-tabloid. Hereafter, in this paper $t$ will be used to denote a tabloid (instead of $\{t\}$) and it will be understood that the representative of the equivalence class is chosen such that the entries of each row are increasing.\\
\end{notn}

Let us now recall some results by Benson and Symonds that will be used below.

\begin{thm}\label{maxElt} \cite[Theorem 7.2]{B_S} Let $M$ be a $kG$-module. Then $$\gamma_G(M)=\max_{E\leq G} \gamma_E(M)$$ where the maximum is taken over the set of elementary abelian $p$-subgroups $E$ of $G$.
\end{thm}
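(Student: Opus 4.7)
The plan is to prove two matching inequalities: $\gamma_G(M) \ge \gamma_E(M|_E)$ for each elementary abelian $p$-subgroup $E$ (the easy direction), and $\gamma_G(M) \le \max_E \gamma_E(M|_E)$ (the hard direction). Since $G$ has only finitely many elementary abelian $p$-subgroups, the maximum is over a finite set and is attained.

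For the easy direction, I would use that $kG$ is free as a $kE$-module, so every projective $kG$-module restricts to a projective $kE$-module. Writing $M^{\otimes n} = \mathrm{core}_G(M^{\otimes n}) \oplus P_n$ with $P_n$ projective over $kG$ and restricting to $E$, one finds that $\mathrm{core}_E(M^{\otimes n}|_E) = \mathrm{core}_E(\mathrm{core}_G(M^{\otimes n})|_E)$ is itself a summand of $\mathrm{core}_G(M^{\otimes n})|_E$. Hence $c_n^E(M|_E) \le \dim \mathrm{core}_G(M^{\otimes n}) = c_n^G(M)$ for every $n$; taking $n$-th roots and letting $n \to \infty$ yields $\gamma_E(M|_E) \le \gamma_G(M)$.

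The hard direction is the substantive one and relies on a quantitative refinement of Chouinard's theorem (a $kG$-module is projective iff its restriction to every elementary abelian $p$-subgroup is projective). The strategy is to extract, for each $n$, a non-projective indecomposable summand $N_n$ of $\mathrm{core}_G(M^{\otimes n})$ whose contribution to $c_n^G(M)$ is at least $c_n^G(M)/f(n)$ for some sub-exponential $f$. By Chouinard applied to $N_n$, some elementary abelian $E_n \le G$ makes $N_n|_{E_n}$ non-projective, and a non-projective indecomposable summand $L_n$ of $N_n|_{E_n}$ then occurs in $M^{\otimes n}|_{E_n}$ with multiplicity at least that of $N_n$ in $M^{\otimes n}$. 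Because there are only finitely many elementary abelian $p$-subgroups, pigeonhole yields a subsequence along which $E_n$ is a fixed subgroup $E$, and on this subsequence $c_n^E(M|_E) \ge c_n^G(M)/h(n)$ for some sub-exponential $h$. Extracting $n$-th roots then gives the reverse inequality.

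The main obstacle is the selection step in the hard direction: producing, in a controlled way, a single non-projective indecomposable summand of $\mathrm{core}_G(M^{\otimes n})$ that carries the exponential growth of $c_n^G(M)$, rather than having that growth smeared across exponentially many low-multiplicity summands. When only finitely many indecomposables can appear in tensor powers of $M$ (as in Remark~\ref{finitesummands}), this is immediate from Remark~\ref{count}. In full generality one needs input from the stable module category --- support varieties, Rickard idempotents, or $\pi$-point machinery --- to guarantee that the exponential growth of the non-projective part is detected upon restriction to some elementary abelian subgroup. Assembling this machinery is the real content of the Benson--Symonds argument and would be the hardest part of carrying out this plan.
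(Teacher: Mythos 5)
This statement is quoted from Benson--Symonds (\cite[Theorem 7.2]{B_S}); the paper you are reading gives no proof of it, so there is nothing internal to compare against. Judged on its own terms, your easy direction is correct and complete: $kG$ is free over $kE$, so projectives restrict to projectives, $c_n^E(M) \le c_n^G(M)\cdot[G:E]$ up to the dimension bookkeeping you describe, and the inequality $\gamma_E(M) \le \gamma_G(M)$ follows (this is the paper's Theorem 2.6, also imported from \cite{B_S}).

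The hard direction, however, contains a genuine gap, and it is exactly the one you flag yourself: the selection step. Your plan requires a single non-projective indecomposable summand $N_n$ of $\mathrm{core}_G(M^{\otimes n})$ carrying a $1/f(n)$ fraction of $c_n^G(M)$ for sub-exponential $f$, and nothing in your argument produces it; when infinitely many indecomposables can occur, the dimension can indeed be smeared over exponentially many summands of bounded multiplicity, and Chouinard applied summand-by-summand then only tells you each one is seen by \emph{some} $E$, with no control on which or on how much dimension survives restriction. There is also a secondary issue in your multiplicity comparison: even granting $N_n$, the multiplicity of $L_n$ in $M^{\otimes n}|_{E_n}$ bounds $c_n^{E_n}$ from below only after relating multiplicities back to dimensions, which needs a uniform lower bound on $\dim L_n$ or an appeal to the robustness of $\gamma$ under counting summands (Remark \ref{count}), itself a nontrivial input. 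The actual Benson--Symonds argument sidesteps indecomposable summands entirely: they first show that $\gamma_G(M)$ can be computed relative to any auxiliary module $V$ that detects projectivity (i.e., $N \otimes V$ projective forces $N$ projective), then take $V = \bigoplus_E k{\uparrow}_E^G$, which detects projectivity by Chouinard, and convert the resulting expression into $\max_E \gamma_E(M{\downarrow}_E)$ by Frobenius reciprocity. So your outline is a reasonable heuristic but is not a proof, and the missing machinery is not optional --- it is the content of the theorem.
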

\smallskip

\begin{thm} \cite[Lemma 2.10]{B_S} If $H$ is a subgroup of $G$ and $M$ is a $kG$-module then $\gamma_H(M) \leq \gamma_G(M)$.
\end{thm}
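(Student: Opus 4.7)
The plan is to reduce the inequality $\gamma_H(M)\leq\gamma_G(M)$ to a termwise inequality $c_n^H(M)\leq c_n^G(M)$, from which the limit inequality is immediate by taking $n$-th roots.

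The first step is the standard observation that restriction preserves projectivity: if $P$ is a projective $kG$-module, then $P\downarrow_H$ is projective as a $kH$-module. This follows because $P$ is a summand of some free module $(kG)^r$, and $kG\downarrow_H$ is free as a $kH$-module (pick a set of right coset representatives of $H$ in $G$ as a basis). Hence summands of restrictions of projectives are projective.

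The second step is to compare the two cores. Write $M^{\otimes n} = \mathrm{core}_G(M^{\otimes n}) \oplus Q$ with $Q$ projective over $kG$. Restricting to $H$,
\[
(M^{\otimes n})\downarrow_H \;=\; \mathrm{core}_G(M^{\otimes n})\downarrow_H \;\oplus\; Q\downarrow_H,
\]
and by the first step $Q\downarrow_H$ is projective over $kH$. Now decompose $\mathrm{core}_G(M^{\otimes n})\downarrow_H = A'\oplus A''$ with $A'$ having no projective summands and $A''$ projective over $kH$. Then $(M^{\otimes n})\downarrow_H = A' \oplus (A''\oplus Q\downarrow_H)$, whose second summand is projective, so by uniqueness of the core up to isomorphism we get $\mathrm{core}_H((M^{\otimes n})\downarrow_H) \cong A'$, a summand of $\mathrm{core}_G(M^{\otimes n})\downarrow_H$. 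Taking dimensions,
\[
c_n^H(M) = \dim A' \;\leq\; \dim\bigl(\mathrm{core}_G(M^{\otimes n})\downarrow_H\bigr) = \dim \mathrm{core}_G(M^{\otimes n}) = c_n^G(M).
\]

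The third step is to pass to the limit. Taking $n$-th roots and letting $n\to\infty$ (both limits exist by the Benson--Symonds theorem quoted above), the inequality $c_n^H(M)\leq c_n^G(M)$ gives $\gamma_H(M)\leq\gamma_G(M)$.

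There is no real obstacle here; the only subtlety is the uniqueness-of-core step, which is just the Krull--Schmidt theorem together with the fact that the non-projective part of a module is well-defined. The essential input is that projectivity is preserved under restriction to a subgroup, which relies on $kG$ being free over $kH$.
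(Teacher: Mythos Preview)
Your argument is correct. The paper does not actually prove this statement: it is quoted without proof from \cite[Lemma 2.10]{B_S}, so there is no in-paper proof to compare against. Your approach---restricting the decomposition $M^{\otimes n}=\mathrm{core}_G(M^{\otimes n})\oplus Q$ to $H$, using that $kG\downarrow_H$ is free to see $Q\downarrow_H$ is projective, and invoking Krull--Schmidt to conclude $c_n^H(M)\leq c_n^G(M)$---is exactly the standard argument and is essentially what Benson and Symonds do in the cited source. The only point you leave implicit is that $(M\downarrow_H)^{\otimes n}\cong (M^{\otimes n})\downarrow_H$, but this is immediate.
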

\smallskip
Therefore, while using \ref{maxElt} it is enough to take the maximum over the maximal elementary abelian $p$-subgroups of $G$.\\

Recall the conjugacy classes of maximal elementary abelian $p$-subgroups in $\mathcal{S}_n$. Let $(\mathbb{Z}/p)^n \cong V_n(p) \hookrightarrow \mathcal{S}_{p^n}$ using notations from \cite[p.185]{EltAbBook}. So, $V_n(p)$ is isomorphic to a rank $n$ elementary abelian $p$-subgroup of $\mathcal{S}_{p^n}$. We have the following theorem:\\

\begin{thm}\label{conjEltAb} \cite[p.185, Theorem 1.3]{EltAbBook} If $n=a_0 + i_1p + i_2 p^2 + ... + i_r p^r$ with $0 \leq a_0 < p$, $i_j \geq 0$ for $1 \leq j \leq r$, then there is a maximal elementary abelian $p$-subgroup of $\mathcal{S}_n$ corresponding to this decomposition 
$$\underbrace{V_1(p) \times ... \times V_1(p)}_{i_1} \times ... \times \underbrace{V_r(p) \times ... \times V_r(p)}_{i_r} $$
$$\subset \underbrace{\mathcal{S}_p \times ... \times \mathcal{S}_p}_{i_1} \times ... \times \underbrace{\mathcal{S}_{p^r} \times ... \times \mathcal{S}_{p^r}}_{i_r} \subset \mathcal{S}_n$$
and as we run over distinct decompositions $(i_1,...,i_r)$ these give the distinct conjugacy classes of maximal elementary abelian $p$-subgroups of $\mathcal{S}_n$.
\end{thm}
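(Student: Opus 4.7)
The plan is to classify the maximal elementary abelian $p$-subgroups of $\mathcal{S}_n$ by their orbit decomposition on $\{1,\ldots,n\}$. Let $E \le \mathcal{S}_n$ be an elementary abelian $p$-subgroup. Since $E$ is a $p$-group, every $E$-orbit on $\{1,\ldots,n\}$ has size a power of $p$; write $b_0$ for the number of fixed points and $p^{k_1},\ldots,p^{k_m}$ (with each $k_j \ge 1$) for the sizes of the non-fixed orbits. First I would analyze the restriction of $E$ to a single non-fixed orbit of size $p^k$. The induced map $E \to \mathcal{S}_{p^k}$ has transitive abelian image; a transitive abelian permutation group is automatically regular, because the point-stabilizers agree along an orbit and coincide with the kernel of the action, which is trivial by transitivity. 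Thus the image has order exactly $p^k$, and up to conjugacy in $\mathcal{S}_{p^k}$ is the regular embedding $V_k(p) \hookrightarrow \mathcal{S}_{p^k}$.

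Combining across orbits, $E$ sits, after a suitable conjugation in $\mathcal{S}_n$, inside
$$V_{k_1}(p) \times \cdots \times V_{k_m}(p) \ \subset\ \mathcal{S}_{p^{k_1}} \times \cdots \times \mathcal{S}_{p^{k_m}} \times \mathcal{S}_{b_0} \ \subset\ \mathcal{S}_n,$$
and this product is itself elementary abelian, so the maximality of $E$ forces equality. To verify that the product genuinely is maximal I would compute its centralizer in $\mathcal{S}_n$: since the centralizer of a regular permutation group equals itself, the centralizer of $V_k(p)$ in $\mathcal{S}_{p^k}$ is $V_k(p)$, whence
$$C_{\mathcal{S}_n}\bigl(V_{k_1}(p) \times \cdots \times V_{k_m}(p)\bigr) \ =\ \bigl(V_{k_1}(p) \times \cdots \times V_{k_m}(p)\bigr) \times \mathcal{S}_{b_0}.$$
Any strictly larger elementary abelian group would require an extra order-$p$ element of this centralizer, and such an element must project non-trivially to $\mathcal{S}_{b_0}$. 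This is possible exactly when $b_0 \ge p$. Hence maximality is equivalent to $b_0 < p$, which identifies $b_0$ with the $a_0$ of the statement.

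Finally I would group the non-fixed orbits by size, setting $i_j$ equal to the number of orbits of cardinality $p^j$, so that $n = a_0 + \sum_{j \ge 1} i_j p^j$ is exactly the decomposition in the theorem, and the corresponding subgroup is the prescribed product. For distinctness, the multiset of orbit sizes on $\{1,\ldots,n\}$ is a conjugation invariant, so different tuples $(i_1,\ldots,i_r)$ yield non-conjugate subgroups; for exhaustivity, the analysis above showed that every maximal elementary abelian $p$-subgroup is, up to $\mathcal{S}_n$-conjugacy, of this form. The hard part is the centralizer computation for the regular embedding $V_k(p) \hookrightarrow \mathcal{S}_{p^k}$ (together with the equivalent classification of regular elementary abelian permutation groups on $p^k$ letters); once that is in place the rest of the classification reduces to bookkeeping with the orbit-size distribution.
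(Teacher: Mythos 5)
The paper does not prove this statement at all: it is quoted verbatim from the cited reference (Adem--Milgram, \emph{Cohomology of Finite Groups}), so there is no internal proof to compare against. Your argument is correct and is essentially the standard one: decompose $\{1,\ldots,n\}$ into $E$-orbits, observe that the image of $E$ on each nontrivial orbit is a transitive abelian (hence regular) elementary abelian group, conjugate it to the regular embedding $V_k(p)\hookrightarrow\mathcal{S}_{p^k}$, use maximality to force $E$ to be the full product, and use the self-centralizing property of regular abelian subgroups to see that maximality is exactly the condition $a_0<p$; distinctness follows since the multiset of orbit sizes is a conjugacy invariant. One small wording fix: the kernel of the action on a single orbit is trivial not ``by transitivity'' but because you have already passed to the image of $E$ in $\mathcal{S}_{p^k}$, which acts faithfully by construction; transitivity is what makes all point stabilizers conjugate (hence equal, by commutativity) to that trivial kernel. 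It is also worth a sentence, which you implicitly rely on, that the centralizer of the full product does not permute distinct orbits: an element centralizing the product must preserve the kernel of the action on each orbit, and these kernels are pairwise distinct, so the centralizer really is the direct product you wrote down.
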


\smallskip

\smallskip

\section{\textbf{Indecomposable summands of $M^{(\lambda|\mu)}$ restricted to an elementary abelian $p$-subgroup}}

In this section, we analyse the indecomposable summands of $M$ when restricted to a maximal elementary abelian $p$-subgroup of $\mathcal{S}_n$ of rank $k$. We first consider a rank $k$ elementary abelian $p$-subgroup of $\mathcal{S}_n$ generated by $p$-cycles.\\
\begin{notn} \label{notation}
Let $P=<(1,2,...,p),(p+1,...,2p),...,((k-1)p+1,...,kp)>$. Let $M_P=M \downarrow _P$. The action of $P$ on $(\lambda|\mu)$-tabloids partitions the set of all $(\lambda|\mu)$-tabloids into $P$-orbits. Given a tabloid $t$, let $<t>$ denote the cyclic $kP$-module generated by $t$. Let $B_i=\{(i-1)p+1,...,ip\}$, for $1\leq i \leq k$ and $B_{k+j}=\{kp+j\}$, for $1 \leq j \leq a_0$. Then, $\{B_1,B_2,...,B_{k+a_0}\}$ is a partition of the set $\{1,2,...,n\}$. We will call each $B_i$ a block. Let $\tau_i=((i-1)p+1,...,ip) \in P \leq \mathcal{S}_n$. Note that $$|B_i| = \left\{\begin{array}{lr}
        p, & \text{when } 1 \leq i \leq k\\
        1, & \text{when } i>k
        \end{array}\right.$$

\end{notn}

\begin{defn} We say that a $(\lambda|\mu)$-tabloid $t$ is constituted by blocks $\{B_i\}_{i \in I}$, $I \subseteq \{1,...,k\}$ if $I$ is the smallest set such that for every element of the set $\{\tau_j\}_{j \in J}$, $J=\{1,...,k\} \setminus I$, $\tau_j \circ t =\pm t$.\\
\end{defn}

\begin{notn} Given a $(\lambda|\mu)$-tabloid $t$, let $I^t \subseteq \{1,...,k\}$ denote the indexing set such that $t$ is constituted by $\{B_i\}_{i \in I^t}$.\\
\end{notn}

\begin{lem} If $d=|I^t|$ then $\operatorname{dim} <t>=p^d$.\\
\end{lem}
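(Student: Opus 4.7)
The plan is to reduce the computation of $\dim \langle t\rangle$ under the signed action $\circ$ to counting the underlying permutation-orbit of the tabloid $t$ in $M^{(\lambda|\mu)}$.

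First I would give a concrete interpretation of $I^t$. Via Definition \ref{action}, the condition $\tau_j \circ t = \pm t$ is equivalent to the underlying equality $\{\tau_j t\} = \{t\}$, i.e.\ $\tau_j$ permutes the entries of $t$ within rows. Since $\tau_j$ is a $p$-cycle supported on $B_j$, this happens precisely when $B_j$ is contained in a single row of $t$. Hence $I^t$ is exactly the set of indices $i$ for which the block $B_i$ is split across at least two rows of $t$, and $d = |I^t|$ counts such split blocks.

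Next I would compute the stabilizer of the underlying tabloid $t$ in $P$ under the ordinary permutation action. Every $\sigma \in P$ factors uniquely as $\sigma = \tau_1^{a_1}\cdots \tau_k^{a_k}$ with $0\leq a_i < p$, and since the blocks $B_i$ are pairwise disjoint, $\sigma$ preserves the rows of $t$ iff each $\tau_i^{a_i}$ does. For $i \notin I^t$ this holds for every $a_i$ by the paragraph above. For $i \in I^t$, the fact that $\tau_i$ has prime order $p$ forces $\langle \tau_i^{a_i}\rangle = \langle \tau_i\rangle$ for every $a_i \neq 0$, so no nontrivial power of $\tau_i$ can preserve the rows of $t$. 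Therefore the stabilizer of $t$ in $P$ equals $\langle \tau_j : j \notin I^t\rangle$, which has order $p^{k-d}$, so the $P$-orbit of $t$ consists of exactly $p^d$ distinct tabloids.

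Finally I would pass from the orbit count to the module dimension. The module $\langle t\rangle$ is the $\mathbf{k}$-span of $\{\pi\circ t : \pi \in P\}$, and by Definition \ref{action} each element $\pi\circ t$ equals $\pm \pi t$ as a scalar multiple of the basis tabloid $\pi t$. Consequently the span of $\{\pi\circ t : \pi\in P\}$ equals the span of the $p^d$ distinct tabloids in the permutation orbit $\{\pi t : \pi \in P\}$, which are linearly independent in $M^{(\lambda|\mu)}$ by Notation \ref{signtabloid}. This yields $\dim \langle t\rangle = p^d$. The only delicate point in the argument is excluding accidental stabilizers from nontrivial powers of $\tau_i$ when $i \in I^t$, and that is handled cleanly by the primality of $p$.
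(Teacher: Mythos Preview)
Your proof is correct. The paper states this lemma without proof, treating it as immediate from the definitions; your argument supplies exactly the routine orbit--stabilizer computation that the paper omits, together with the observation that the signs in the $\circ$-action do not affect the span. In particular your use of the primality of $p$ to rule out nontrivial powers of $\tau_i$ stabilizing $t$ when $i\in I^t$ is the right point to make explicit.
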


\begin{cor} If $I^t=\{1,...,k\}$ then $\operatorname{dim} <t>=p^k$ and $<t>$ is a projective $\mathbf{k}P$-module.\\
\end{cor}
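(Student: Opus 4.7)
The first equality is an immediate specialization of the preceding lemma: taking $d = |I^t| = k$ gives $\dim \langle t\rangle = p^k$. So the only real content of the corollary is the projectivity claim.

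My plan for projectivity is a dimension-count argument using the regular representation. Since $P$ is elementary abelian of rank $k$, we have $|P| = p^k$, so $\dim \mathbf{k}P = p^k = \dim \langle t\rangle$. The module $\langle t\rangle$ is cyclic by definition, generated by the tabloid $t$, so the $\mathbf{k}P$-linear map
\[
\varphi : \mathbf{k}P \longrightarrow \langle t\rangle, \qquad \varphi(g) = g \circ t,
\]
(extended linearly) is well-defined because $\circ$ is a group action, and it is surjective because its image contains every generator of $\langle t\rangle$. Matching dimensions forces $\varphi$ to be an isomorphism, so $\langle t\rangle$ is isomorphic to the regular $\mathbf{k}P$-module and is in particular free, hence projective.

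The only thing I need to be careful about is that the sign appearing in the definition of $\circ$ does not spoil the $\mathbf{k}P$-linearity of $\varphi$. This is not really an issue: the earlier verification that $\pi_1 \circ (\pi_2 \circ t) = (\pi_1\pi_2) \circ t$ (built into the proof that $\circ$ defines a module structure) is exactly the statement $\varphi(\pi_1 \pi_2) = \pi_1 \circ \varphi(\pi_2)$ on basis elements, which extends linearly to all of $\mathbf{k}P$.

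I do not anticipate any real obstacle: the hard work has already been done in establishing the previous lemma $\dim \langle t\rangle = p^d$. Once that dimension formula is in hand, the projectivity reduces to the standard observation that a cyclic module over a group algebra whose dimension equals the group order is the regular representation.
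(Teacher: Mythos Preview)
Your proof is correct and is precisely the argument the paper has in mind. The paper states this corollary without proof, treating it as immediate from the preceding lemma; the explicit map $g \mapsto g \circ t$ that you use is exactly the isomorphism the paper writes down a few results later in Lemma~\ref{indecomp} (there for a general subgroup $Q$, of which your case is the special instance $Q=P$).
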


\begin{rem} \label{construct} Observe that for $n$, $k$ and $B_i$'s as above, given an indexing set $I \subseteq \{1,...,k\}$, a $(\lambda|\mu)$-tabloid $t$ constituted by blocks $\{B_i\}_{i \in I}$ can be constructed by filling in entries such that all the entries of the chosen blocks do not lie in the same row of $t$.
\end{rem}

Let us now look at an example:
\begin{exmp} \label{myexample}  Let $(\lambda|\mu)=(3,1|3,2)$ and $M^{(\lambda|\mu)}$ be the signed permutation module of the symmetric group $\mathcal{S}_n$ over a field $\mathbf{k}$ of characteristic 3. Let $P=<(1,2,3),(4,5,6),(7,8,9)>$.
$$\dim M^{(\lambda|\mu)}=\frac{9!}{3!1!3!2!}=5040.$$
Following Notation \ref{notation}, $B_1=\{1,2,3\},B_2=\{4,5,6\},B_3=\{7,8,9\}$ and $\tau_1=(1,2,3),\tau_2=(4,5,6),\tau_3=(7,8,9)$. Consider \\
$$t_1=
\ytableausetup{boxsize=normal,tabloids}
\ytableaushort{{\cR 4}{\cR 5}{\cR 6}, {\cR 1}, {\cB 7}{\cB 8}{\cB 9}, {\cB 2}{\cB 3}} $$
which is a $(3,1|3,2)$-tabloid constituted by the block $B_1$. The cyclic $\mathbf{k}P$-module generated by $t_1$ is 3-dimensional with following vectors as a basis:
$$ \ytableausetup{boxsize=normal,tabloids}
\ytableaushort{{\cR 4}{\cR 5}{\cR 6}, {\cR 1}, {\cB 7}{\cB 8}{\cB 9}, {\cB 2}{\cB 3}} \text{ , } - \ytableaushort{{\cR 4}{\cR 5}{\cR 6}, {\cR 2}, {\cB 7}{\cB 8}{\cB 9}, {\cB 3}{\cB 1}}\text{ , } \ytableaushort{{\cR 4}{\cR 5}{\cR 6}, {\cR 3}, {\cB 7}{\cB 8}{\cB 9}, {\cB 1}{\cB 2}} $$
Consider another tabloid; for instance, $$s_1=
\ytableausetup{boxsize=normal,tabloids}
\ytableaushort{{\cR 7}{\cR 8}{\cR 9}, {\cR 1}, {\cB 2}{\cB 3}{\cB 4}, {\cB 5}{\cB 6}} $$
which is a $(3,1|3,2)$-tabloid constituted by the blocks $B_1$ and $B_2$. The cyclic $\mathbf{k}P$-module generated by $t_2$ is 9-dimensional with following vectors as a basis:
$$ \ytableausetup{boxsize=normal,tabloids}
\ytableaushort{{\cR 7}{\cR 8}{\cR 9}, {\cR 1}, {\cB 2}{\cB 3}{\cB 4}, {\cB 5}{\cB 6}} \text{ , } - \ytableaushort{{\cR 7}{\cR 8}{\cR 9}, {\cR 2}, {\cB 3}{\cB 1}{\cB 4}, {\cB 5}{\cB 6}}\text{ , } \ytableaushort{{\cR 7}{\cR 8}{\cR 9}, {\cR 3}, {\cB 1}{\cB 2}{\cB 4}, {\cB 5}{\cB 6}} \text{ , } $$ 
\smallskip
$$ - \ytableaushort{{\cR 7}{\cR 8}{\cR 9}, {\cR 1}, {\cB 2}{\cB 3}{\cB 5}, {\cB 6}{\cB 4}} \text{ , }  \ytableaushort{{\cR 7}{\cR 8}{\cR 9}, {\cR 2}, {\cB 3}{\cB 1}{\cB 5}, {\cB 6}{\cB 4}}\text{ , }- \ytableaushort{{\cR 7}{\cR 8}{\cR 9}, {\cR 3}, {\cB 1}{\cB 2}{\cB 5}, {\cB 6}{\cB 4}} \text{ , } $$ 
\smallskip
$$ \ytableaushort{{\cR 7}{\cR 8}{\cR 9}, {\cR 1}, {\cB 2}{\cB 3}{\cB 6}, {\cB 4}{\cB 5}} \text{ , } - \ytableaushort{{\cR 7}{\cR 8}{\cR 9}, {\cR 2}, {\cB 3}{\cB 1}{\cB 6}, {\cB 4}{\cB 5}}\text{ , } \ytableaushort{{\cR 7}{\cR 8}{\cR 9}, {\cR 3}, {\cB 1}{\cB 2}{\cB 6}, {\cB 4}{\cB 5}} $$

\end{exmp}

\begin{rem}
\begin{enumerate}[(i)]
\item The work above gives us a direct sum decomposition of $M_P$, so that $$M_P=\bigoplus_{\alpha \in \mathcal{I}} A_\alpha$$
with $\operatorname{dim} A_\alpha = p^{d_\alpha}$ for some non-negative integer $d_\alpha$ where $\mathcal{I}$ is an indexing set counting the number of summands.\\
\item Each summand is cyclic and has as its basis a single orbit. We will show in Theorem \ref{summandindec} that they are indecomposable.\\
\end{enumerate}
\end{rem}

\noindent \textbf{Example 3.7. (cont.)} Coming back to $M^{(\lambda|\mu)}$ when $(\lambda|\mu)=(3,1|3,2)$. We have,

$$t_2=
\ytableausetup{boxsize=normal,tabloids}
\ytableaushort{{\cR 7}{\cR 8}{\cR 9}, {\cR 1}, {\cB 4}{\cB 5}{\cB 6}, {\cB 2}{\cB 3}} $$
is another tabloid constituted by the block $B_1$. Observe that
$$t_3=
\ytableausetup{boxsize=normal,tabloids}
\ytableaushort{{\cR 1}{\cR 2}{\cR 3}, {\cR 4}, {\cB 7}{\cB 8}{\cB 9}, {\cB 5}{\cB 6}} \text{ and }  t_4=
\ytableausetup{boxsize=normal,tabloids}
\ytableaushort{{\cR 7}{\cR 8}{\cR 9}, {\cR 4}, {\cB 1}{\cB 2}{\cB 3}, {\cB 5}{\cB 6}} $$
are tabloids constituted by the block $B_2$, and 
$$t_5=
\ytableausetup{boxsize=normal,tabloids}
\ytableaushort{{\cR 1}{\cR 2}{\cR 3}, {\cR 7}, {\cB 4}{\cB 5}{\cB 6}, {\cB 8}{\cB 9}}  \text{ and } t_6=
\ytableausetup{boxsize=normal,tabloids}
\ytableaushort{{\cR 4}{\cR 5}{\cR 6}, {\cR 7}, {\cB 1}{\cB 2}{\cB 3}, {\cB 8}{\cB 9}} $$ 
are tabloids constituted by the block $B_3$.\\
The six different cyclic $\mathbf{k}P$-submodules generated by the tabloids $t_1,...,t_6$ are all 3-dimensional and any other tabloid constituted by exactly one block is a basis element for one of these cyclic submodules.\\

\noindent Recall that $s_1$ is a tabloid constituted by the blocks $B_1$ and $B_2$. Here is a list of some other such tabloids:
$$s_2=
\ytableausetup{boxsize=normal,tabloids}
\ytableaushort{{\cR 7}{\cR 8}{\cR 9}, {\cR 4}, {\cB 1}{\cB 5}{\cB 6}, {\cB 2}{\cB 3}} \text{ , } s_3=
\ytableausetup{boxsize=normal,tabloids}
\ytableaushort{{\cR 7}{\cR 8}{\cR 9}, {\cR 1}, {\cB 3}{\cB 5}{\cB 6}, {\cB 2}{\cB 4}} \text{ , } s_4=
\ytableausetup{boxsize=normal,tabloids}
\ytableaushort{{\cR 7}{\cR 8}{\cR 9}, {\cR 4}, {\cB 2}{\cB 3}{\cB 6}, {\cB 1}{\cB 5}}  \text{ , }$$
\smallskip
$$s_5=
\ytableausetup{boxsize=normal,tabloids}
\ytableaushort{{\cR 7}{\cR 8}{\cR 9}, {\cR 1}, {\cB 2}{\cB 5}{\cB 6}, {\cB 3}{\cB 4}} \text{ , } s_6=
\ytableausetup{boxsize=normal,tabloids}
\ytableaushort{{\cR 7}{\cR 8}{\cR 9}, {\cR 4}, {\cB 2}{\cB 3}{\cB 5}, {\cB 1}{\cB 6}} \text{ , } s_7=
\ytableausetup{boxsize=normal,tabloids}
\ytableaushort{{\cR 2}{\cR 3}{\cR 4}, {\cR 1}, {\cB 7}{\cB 8}{\cB 9}, {\cB 1}{\cB 5}}  \text{ , }  s_8=
\ytableausetup{boxsize=normal,tabloids}
\ytableaushort{{\cR 1}{\cR 5}{\cR 6}, {\cR 4}, {\cB 7}{\cB 8}{\cB 9}, {\cB 2}{\cB 3}} \text{ , }  $$
\smallskip
$$
s_9=
\ytableausetup{boxsize=normal,tabloids}
\ytableaushort{{\cR 3}{\cR 5}{\cR 6}, {\cR 1}, {\cB 7}{\cB 8}{\cB 9}, {\cB 2}{\cB 4}} \text{ , } s_{10}=
\ytableausetup{boxsize=normal,tabloids}
\ytableaushort{{\cR 2}{\cR 3}{\cR 6}, {\cR 4}, {\cB 7}{\cB 8}{\cB 9}, {\cB 1}{\cB 5}}  \text{ , } s_{11}=
\ytableausetup{boxsize=normal,tabloids}
\ytableaushort{{\cR 2}{\cR 5}{\cR 6}, {\cR 1}, {\cB 7}{\cB 8}{\cB 9}, {\cB 3}{\cB 4}} \text{ , } s_{12}=
\ytableausetup{boxsize=normal,tabloids}
\ytableaushort{{\cR 2}{\cR 3}{\cR 5}, {\cR 4}, {\cB 7}{\cB 8}{\cB 9}, {\cB 1}{\cB 6}}.$$
\smallskip

Again the twelve different cyclic $\mathbf{k}P$-submodules generated by the tabloids $s_1,...,s_{12}$ are all 9-dimensional and any other tabloid constituted by the blocks $B_1$ and $B_2$ is a basis element for one of these cyclic submodules. Similarly, we can choose generators $u_1,...,u_{12}$ that are tabloids constituted by the blocks $B_1$ and $B_3$ and $v_1,...,v_{12}$ that are tabloids constituted by the blocks $B_2$ and $B_3$ such that every other tabloid constituted by exactly two blocks is a basis element for a cyclic $\mathbf{k}P$-submodule generated by one of our chosen tabloids. \\
\smallskip

\textbf{Tensor products of indecomposable summands}\\

\begin{notn} Let $A_i ^{(d)}$ be a direct summand of $M_P$ such that dim $A_i ^{(d)}=p^d$. Let $A_i ^{(d)}=<t_i>$, for some tabloid $t_i$. Let $A_Q$ denote the restriction of a $\mathbf{k}P$-module $A$ to a subgroup $Q$ of $P$.
\end{notn}

\begin{lem} \label{indecomp} If $t$ is constituted from the blocks $B_1,...,B_d$, then $<t>_Q \cong \mathbf{k}Q$ where $Q=<(1,2,...,p),(p+1,...,2p),...,((d-1)p+1,...,dp)>$.
\end{lem}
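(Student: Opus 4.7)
The plan is to show $\langle t\rangle_Q$ is cyclic over $\mathbf{k}Q$, generated by $t$ itself, and then use a dimension count to identify it with the regular representation.

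First I would use the hypothesis in an essential way: because $t$ is constituted from the blocks $B_1,\ldots,B_d$, the definition of ``constituted by'' gives that for every $j\in\{d+1,\ldots,k\}$, $\tau_j\circ t=\varepsilon_j\,t$ for some sign $\varepsilon_j\in\{\pm 1\}$. Since $P=\langle\tau_1,\ldots,\tau_k\rangle$ is abelian and the $\tau_i$ commute, every element of $P$ can be written uniquely as $q\cdot\sigma$ with $q=\tau_1^{a_1}\cdots\tau_d^{a_d}\in Q$ and $\sigma=\tau_{d+1}^{a_{d+1}}\cdots\tau_k^{a_k}$. For such a product,
\[
(q\sigma)\circ t \;=\; q\circ(\sigma\circ t)\;=\;q\circ\bigl(\varepsilon_{d+1}^{a_{d+1}}\cdots\varepsilon_k^{a_k}\,t\bigr)\;=\;\pm\,(q\circ t).
\]
Hence $\langle t\rangle$, which as a $\mathbf{k}P$-module is spanned by $\{\pi\circ t:\pi\in P\}$, is already spanned (as a vector space) by $\{q\circ t:q\in Q\}$. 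In other words, $\langle t\rangle_Q=\mathbf{k}Q\cdot t$, so $\langle t\rangle_Q$ is a cyclic $\mathbf{k}Q$-module with generator $t$.

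Next I would match dimensions. By the earlier Lemma with $|I^t|=d$, we have $\dim\langle t\rangle=p^d=|Q|=\dim\mathbf{k}Q$. Restriction does not change dimension, so $\dim\langle t\rangle_Q=|Q|$. Since $\langle t\rangle_Q$ is cyclic it is isomorphic to $\mathbf{k}Q/\mathrm{Ann}_{\mathbf{k}Q}(t)$, and equality of dimensions forces $\mathrm{Ann}_{\mathbf{k}Q}(t)=0$. Therefore $\langle t\rangle_Q\cong\mathbf{k}Q$ as required.

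I do not anticipate a serious obstacle here; the only mildly delicate point is the first step, namely that the signed action permits us to collapse the contribution of the ``inessential'' generators $\tau_{d+1},\ldots,\tau_k$ into an overall sign rather than a new orbit direction. This is precisely what the defining condition ``$\tau_j\circ t=\pm t$ for $j\notin I^t$'' supplies, so the lemma follows cleanly from the definition of constituted by, commutativity of $P$, and the dimension formula from the preceding lemma.
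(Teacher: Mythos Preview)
Your proof is correct and follows essentially the same approach as the paper: the paper simply defines the map $\phi:\mathbf{k}Q\to\langle t\rangle_Q$ by $g\mapsto g\circ t$ and asserts it is an isomorphism, while you supply the details behind that assertion (surjectivity from collapsing the action of $\tau_{d+1},\ldots,\tau_k$ to a sign, then injectivity by the dimension count $\dim\langle t\rangle=p^d=|Q|$).
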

\begin{proof}
Let $\phi:\mathbf{k}Q \to <t>_Q$ be defined by $g \mapsto g \circ t$. This is an isomorphism. 
\end{proof}
\smallskip

\begin{cor} Let $A_i ^{(d)}=<t_i>$ and $A_j ^{(d)}=<t_j>$. $A_i ^{(d)} \cong A_j ^{(d)}$ if and only if both $t_i$ and $t_j$ are constituted by the same set of blocks indexed by $I$.
\end{cor}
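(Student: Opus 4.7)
The plan is to identify the indexing set $I^t$ as a complete isomorphism invariant of $\langle t\rangle$ viewed as a $\mathbf{k}P$-module, by reading off $I^t$ from the restrictions of $\langle t\rangle$ to the cyclic subgroups $\langle \tau_j\rangle$, $1\le j\le k$. Once this is established both directions of the corollary drop out.

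The crux is the observation that when $j\notin I^t$, the generator $\tau_j$ acts as the identity on all of $\langle t\rangle$. By the definition of $I^t$ we have $\tau_j\circ t=\epsilon t$ for some $\epsilon\in\{\pm 1\}$. Applying $\tau_j$ a total of $p$ times gives $t=\epsilon^{p}\, t$, which forces $\epsilon=1$ (automatic for $p$ odd, vacuous for $p=2$). Since the $\tau_i$ are pairwise disjoint cycles they commute, so for every $\pi\in Q:=\langle \tau_i : i\in I^t\rangle$ one has $\tau_j\circ(\pi\circ t)=\pi\circ(\tau_j\circ t)=\pi\circ t$; that is, $\tau_j$ fixes each basis vector of $\langle t\rangle$.

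For the only if direction, the straightforward extension of Lemma \ref{indecomp} to an arbitrary indexing set gives that $\langle t\rangle\!\downarrow_{\langle \tau_i\rangle}$ is a free $\mathbf{k}\langle\tau_i\rangle$-module for each $i\in I^t$, while the previous paragraph shows that this restriction is the trivial module for each $j\notin I^t$. Thus $I^t=\{j:\langle t\rangle\!\downarrow_{\langle \tau_j\rangle}\text{ is free}\}$, a quantity preserved by $\mathbf{k}P$-isomorphism, so $A_i^{(d)}\cong A_j^{(d)}$ forces $I^{t_i}=I^{t_j}$. For the if direction, assume $I^{t_i}=I^{t_j}=I$ and define $\phi:\langle t_i\rangle\to\langle t_j\rangle$ on the orbit basis by $\pi\circ t_i\mapsto \pi\circ t_j$ for $\pi\in Q=\langle \tau_\ell:\ell\in I\rangle$. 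This is a linear bijection between bases, it is $\mathbf{k}Q$-equivariant by the proof of Lemma \ref{indecomp}, and it is automatically equivariant under each $\tau_j$ with $j\notin I$ since both modules have $\tau_j$ fixing every basis vector; hence $\phi$ is a $\mathbf{k}P$-isomorphism. The main potential obstacle is the sign appearing in the signed action $\circ$, but the order-$p$ constraint in characteristic $p$ trivialises that sign on the stabilising generators, after which the argument is routine bookkeeping.
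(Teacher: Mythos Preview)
Your proof is correct and follows essentially the same approach as the paper: for the ``if'' direction both arguments invoke Lemma~\ref{indecomp} to identify each $\langle t\rangle$ with $\mathbf{k}Q$ as a $\mathbf{k}Q$-module and then extend the resulting isomorphism to $P$ using that the complementary generators $\tau_j$, $j\notin I$, act trivially (equivalently, $P=Q\times R$). The only difference is that the paper dismisses the ``only if'' direction as obvious, whereas you supply a clean justification by characterising $I^t$ as the set of indices $j$ for which $\langle t\rangle\!\downarrow_{\langle\tau_j\rangle}$ is free; your handling of the sign via $\epsilon^p=1$ is the right way to make the ``$R$ acts trivially'' step rigorous.
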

\begin{proof}
Consider $Q$ a subgroup of $P$ generated by the $p$-cycles $\{\tau_\alpha\}_{\alpha \in I}$. Then, both $<t_i>_Q$ and $<t_j>_Q$ are isomorphic to $\mathbf{k}Q$. Note that $P=Q \times R$ for some $R$, therefore the isomorphism between $<t_i>_Q$ and $<t_j>_Q$ can be extended to an isomorphism between $<t_i>$ and $<t_j>$ since the elements of $P$ that are not in $Q$ act trivially on $<t_i>$ and $<t_j>$. The converse is obvious.
\end{proof}
\smallskip

\noindent \textbf{Example 3.7. (cont.)} Looking back at Example \ref{myexample}, we see that $<t_1> \cong <t_2>$, $<t_3> \cong <t_4>$ and $<t_5> \cong <t_6>$. Also
$$<s_i> \cong <s_j> \text{, for all } i,j \in \{1,...,12\}.$$
$$<u_i> \cong <u_j> \text{, for all } i,j \in \{1,...,12\}.$$
$$<v_i> \cong <v_j> \text{, for all } i,j \in \{1,...,12\}.$$

\begin{thm} \label{summandindec}  The cyclic summands of $M_P$ found above are indecomposable.
\end{thm}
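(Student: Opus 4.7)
The plan is to reduce the indecomposability of $\langle t\rangle$ as a $\mathbf{k}P$-module to the (standard) fact that the regular representation of a finite $p$-group in characteristic $p$ is indecomposable, and to pass between the two via the restriction identity already established in Lemma \ref{indecomp}.

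First, fix a tabloid $t$ constituting one of the cyclic summands, and let $I = I^t \subseteq \{1,\ldots,k\}$ with $d = |I|$. Let $Q \leq P$ be the subgroup generated by $\{\tau_i\}_{i \in I}$, so $Q$ is elementary abelian of rank $d$. After a suitable relabeling of blocks (which does not affect indecomposability), Lemma \ref{indecomp} applies and gives $\langle t \rangle_Q \cong \mathbf{k}Q$ as $\mathbf{k}Q$-modules.

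Next, I would invoke the classical fact that the group algebra $\mathbf{k}Q$ of a finite $p$-group $Q$ over a field $\mathbf{k}$ of characteristic $p$ is a local ring: its augmentation ideal is nilpotent and is the unique maximal ideal, so the only idempotents of $\mathbf{k}Q$ are $0$ and $1$. Hence $\mathbf{k}Q$, viewed as a left module over itself, is indecomposable.

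To conclude, suppose for contradiction that $\langle t\rangle = A \oplus B$ is a nontrivial direct sum decomposition of $\mathbf{k}P$-modules. Restriction commutes with direct sums, so $\langle t\rangle_Q = A_Q \oplus B_Q$ with both summands nonzero. This contradicts the indecomposability of $\langle t\rangle_Q \cong \mathbf{k}Q$ established in the previous step, and therefore $\langle t\rangle$ is indecomposable. I expect no serious obstacle: the one point to be careful about is that the isomorphism $\langle t\rangle_Q \cong \mathbf{k}Q$ really does come from Lemma \ref{indecomp} applied to the subgroup $Q$ corresponding to the blocks in $I^t$, since the remaining generators $\tau_j$ (for $j \notin I^t$) act on $\langle t\rangle$ only by scalars $\pm 1$ and thus genuinely produce no new linearly independent vectors in the orbit.
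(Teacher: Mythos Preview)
Your proposal is correct and follows essentially the same approach as the paper: restrict $\langle t\rangle$ to the subgroup $Q$ generated by the $p$-cycles constituting $t$, use Lemma~\ref{indecomp} to identify the restriction with $\mathbf{k}Q$, observe that $\mathbf{k}Q$ is indecomposable (since $Q$ is a $p$-group in characteristic $p$), and conclude that $\langle t\rangle$ is indecomposable because a nontrivial $\mathbf{k}P$-decomposition would restrict to a nontrivial $\mathbf{k}Q$-decomposition. Your write-up simply makes explicit the standard facts (locality of $\mathbf{k}Q$, the contradiction argument) that the paper leaves implicit.
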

\begin{proof}
Let $A$ be a summand of $M_P$. We know that $A$ is cyclic so let $A=<t>$ for some tabloid $t$. Then by Lemma \ref{indecomp} we know that $A_Q$ is isomorphic to $\mathbf{k}Q$, where $Q$ is the subgroup of $P$ generated by the $p$-cycles that constitute $t$. So $A_Q$ is an indecomposable $\mathbf{k}Q$-module. Hence, $A$ must be indecomposable as a $\mathbf{k}P$-module.
\end{proof}
\smallskip

\begin{rem} If $d=k$ then $A_i ^{(d)}$ is projective. Therefore if $A_i ^{(d)}$ is a non-projective summand then $d<k$.
\end{rem}
\smallskip

\noindent \textbf{Example 3.7. (cont.)} Consider the tabloid
$$z_1=
\ytableausetup{boxsize=normal,tabloids}
\ytableaushort{{\cR 1}{\cR 2}{\cR 4}, {\cR 3}, {\cB 5}{\cB 6}{\cB 7}, {\cB 8}{\cB 9}} $$
which is constituted by the blocks $B_1$, $B_2$ and $B_3$. The cyclic $\mathbf{k}P$-module generated by $z_1$ is a 27-dimensional projective $\mathbf{k}P$-module.

\smallskip

\begin{lem} \label{dimen} $A_i ^{(d)} \otimes A_i ^{(d)} \cong p^d A_i ^{(d)}$.
\end{lem}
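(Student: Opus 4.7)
The plan is to exploit the factorisation $P=Q\times R$, where $Q=\langle \tau_\alpha : \alpha\in I^t\rangle$ is the subgroup of $P$ generated by the $d$ $p$-cycles that constitute $t$, and $R$ is generated by the remaining $k-d$ cycles. By the very definition of $I^t$, each generator $\tau_\beta$ with $\beta\notin I^t$ satisfies $\tau_\beta\circ t = \pm t$, so $R$ acts on $\langle t\rangle$ through signs. In fact, a closer look at $\epsilon$ shows that each such $\tau_\beta$ must act by $+1$ (since $\tau_\beta$ is a $p$-cycle, hence an even permutation for $p$ odd, and the $p=2$ case can be checked separately or absorbed by noting $\tau_\beta^p = 1$ forces the character to be a $p$-th root of unity in $\mathbf{k}$, hence trivial). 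Thus $R$ acts trivially on $A_i^{(d)}=\langle t\rangle$.

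With this observation, Lemma \ref{indecomp} upgrades to an isomorphism of $\mathbf{k}P$-modules
$$A_i^{(d)} \;\cong\; \mathbf{k}Q \boxtimes \mathbf{k}_R,$$
where $\mathbf{k}Q$ is the regular $\mathbf{k}Q$-module and $\mathbf{k}_R$ is the trivial $\mathbf{k}R$-module. The tensor product then factors as
$$A_i^{(d)}\otimes A_i^{(d)} \;\cong\; \bigl(\mathbf{k}Q \otimes_{\mathbf{k}} \mathbf{k}Q\bigr) \boxtimes \bigl(\mathbf{k}_R\otimes_{\mathbf{k}} \mathbf{k}_R\bigr) \;\cong\; \bigl(\mathbf{k}Q\otimes_{\mathbf{k}} \mathbf{k}Q\bigr)\boxtimes \mathbf{k}_R.$$

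Now I would invoke the standard fact that for any finite group $G$ and any $\mathbf{k}G$-module $N$, one has $\mathbf{k}G\otimes_{\mathbf{k}} N \cong (\dim N)\cdot \mathbf{k}G$; applying this with $G=Q$ and $N=\mathbf{k}Q$ gives $\mathbf{k}Q\otimes \mathbf{k}Q \cong p^d\,\mathbf{k}Q$. Substituting back yields
$$A_i^{(d)}\otimes A_i^{(d)} \;\cong\; p^d\bigl(\mathbf{k}Q\boxtimes \mathbf{k}_R\bigr) \;\cong\; p^d\, A_i^{(d)},$$
which is the claimed isomorphism.

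The main subtlety, and the only place a reader might pause, is the first paragraph: verifying that $R$ really acts trivially (not merely by signs) on $\langle t\rangle$, so that the external tensor decomposition $A_i^{(d)}\cong \mathbf{k}Q\boxtimes \mathbf{k}_R$ is valid. Everything afterwards reduces to the well-known regular-module identity. If one wanted to avoid the trivial-action argument entirely, an alternative is to restrict both sides of the claimed isomorphism to $Q$, apply Lemma \ref{indecomp} to see $(A_i^{(d)}\otimes A_i^{(d)})_Q \cong p^d\,\mathbf{k}Q \cong (p^d\,A_i^{(d)})_Q$, and then extend to $P$ using that $P=Q\times R$ with $R$ acting through a one-dimensional character on each side.
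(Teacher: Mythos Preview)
Your argument is correct and is essentially the paper's own proof. The paper restricts to $Q$, uses Lemma~\ref{indecomp} to obtain $(A_i^{(d)})_Q\otimes (A_i^{(d)})_Q \cong p^d\,\mathbf{k}Q$, and then extends the isomorphism to $P$ by asserting that the generators of $R$ act trivially on $A_i^{(d)}$ and $P=Q\times R$; your external-tensor formulation $A_i^{(d)}\cong \mathbf{k}Q\boxtimes \mathbf{k}_R$ is just a repackaging of the same two facts, and your ``alternative'' at the end is verbatim the paper's route. If anything, your first paragraph is more careful than the paper, which simply states that $R$ acts trivially without addressing the possible sign.
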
  
\begin{proof}
Consider $Q$ a subgroup of $P$ of rank $d$ generated by the $p$-cycles that correspond to the $d$ blocks that constitute $t$. Then, ${A_i ^{(d)}}_Q \cong \mathbf{k}Q$. So, we have ${A_i ^{(d)}}_Q \otimes {A_i ^{(d)}}_Q \cong p^d {A_i ^{(d)}}_Q$. This isomorphism can easily be extended to $P$ since the elements of $P$ not in $Q$ act trivially on $A_i ^{(d)}$ and $P=Q \times R$ for some $R$.
\end{proof}

\smallskip

\begin{lem} If $A_i ^{(d)} \ncong A_j ^{(d')}$ then the indecomposable summands of $A_i ^{(d)} \otimes A_j ^{(d')}$ are of the form $A_k ^{(e)}$, for some $e>\operatorname{max}\{d,d'\} $.
\end{lem}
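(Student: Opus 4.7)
The plan is to reduce to the subgroup $Q_K \leq P$ generated by the $p$-cycles $\tau_\alpha$ for $\alpha \in K := I^{t_i} \cup I^{t_j}$, decompose the tensor product on $Q_K$ using the external product structure $Q_K = \prod_{\alpha \in K} \langle \tau_\alpha \rangle$, and then inflate back to $P$.

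First, since $A_i^{(d)} \ncong A_j^{(d')}$, Corollary 3.14 forces $I := I^{t_i} \neq I' := I^{t_j}$. Writing $P = Q_K \times R$ with $R$ generated by the remaining $p$-cycles, the definition of constituent blocks ensures $R$ acts trivially on each of $A_i^{(d)}$ and $A_j^{(d')}$, hence on their tensor product. Therefore $A_i^{(d)} \otimes A_j^{(d')}$ is the inflation to $P$ of its own restriction to $Q_K$, and it suffices to analyze that restriction.

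Next, I would decompose each restriction as an external tensor across the slots $\alpha \in K$: $A_i^{(d)}|_{Q_K} \cong \boxtimes_{\alpha \in K} V_\alpha^i$, where $V_\alpha^i = \mathbf{k}\langle \tau_\alpha \rangle$ if $\alpha \in I$ and $V_\alpha^i = \mathbf{k}$ (trivial) otherwise, and similarly $V_\alpha^j$ for $A_j^{(d')}|_{Q_K}$. Diagonal tensoring factorizes slot by slot; at each $\alpha \in I \cap I'$ apply Lemma 3.15 to the cyclic group $\langle \tau_\alpha \rangle$ of order $p$ to get $p \cdot \mathbf{k}\langle \tau_\alpha \rangle$, while at $\alpha \in K \setminus (I \cap I')$ use the tautology $V \otimes \mathbf{k} \cong V$. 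Multiplying across $K$ yields
\[
(A_i^{(d)} \otimes A_j^{(d')})|_{Q_K} \;\cong\; p^{|I \cap I'|} \cdot \mathbf{k}Q_K,
\]
a sum of copies of the regular representation of $Q_K$. Inflating back to $P$, each copy of $\mathbf{k}Q_K$ becomes a summand isomorphic to the indecomposable $A_k^{(e)}$ whose generator is constituted by exactly the blocks indexed by $K$, so $e = |K| = d + d' - |I \cap I'|$.

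Finally, $e > \max\{d, d'\}$ is equivalent to $|I \cap I'| < \min\{d, d'\}$, i.e., to neither of $I$, $I'$ being contained in the other. This is automatic when $d = d'$ (from $I \neq I'$ together with $|I| = |I'|$), which is the case driving the subsequent asymptotics for $\gamma_P$. The main obstacle I anticipate is precisely the bookkeeping of this last strict inequality: the external tensor calculation itself is clean, but one must carefully rule out the nested configuration $I \subsetneq I'$ (or vice versa), which would give merely $e = \max\{d, d'\}$ and thus require a separate argument.
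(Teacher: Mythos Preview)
Your computation is the paper's argument, carried out more explicitly. The paper simply takes the cyclic generator $t_i\otimes t_j$, notes it is constituted by the blocks indexed by $K=I\cup I'$, and writes down the isomorphism $\langle t_k\rangle\to\langle t_i\otimes t_j\rangle$, $g\circ t_k\mapsto g\circ t_i\otimes g\circ t_j$, for any tabloid $t_k$ constituted by $K$. Your route through the external product $Q_K=\prod_{\alpha\in K}\langle\tau_\alpha\rangle$ arrives at the same place and has the minor advantage of making the multiplicity $p^{|I\cap I'|}$ explicit, which the paper's one-line map leaves implicit in a dimension count.

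Your worry in the final paragraph is legitimate, and you have not overlooked an argument. The paper's proof simply asserts $e=|I\cup I'|>\max\{d,d'\}$ from $\mathcal B_i\neq\mathcal B_j$, but this fails precisely in the nested configuration $I\subsetneq I'$ you isolate, where one only gets $e=\max\{d,d'\}$. So the strict inequality in the lemma is overstated in both the statement and the paper's proof; the clean conclusion your calculation actually yields is $e=|I\cup I'|\geq\max\{d,d'\}$. This does no damage downstream: the only use made of the lemma is the projectivity criterion in the subsequent corollary and the Section~4 asymptotics, both of which depend solely on the identity $e=|I\cup I'|$ (i.e., on the constituent-block set of a tensor being the union of the factors' block sets), never on strictness.
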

\begin{proof}
Let $A_i ^{(d)}=<t_i>$ and $A_j ^{(d')}=<t_j>$ be constituted from the set $\mathcal{B}_i, \mathcal{B}_j \subset \{B_1,...,B_k\}$ respectively with $|\mathcal{B}_i|=d$ and $|\mathcal{B}_j|=d'$ and $\mathcal{B}_i \neq \mathcal{B}_j$. Let $e=|\mathcal{B}_i \cup \mathcal{B}_j|> \operatorname{max}\{d,d'\} $. Let $t_k$ be a tabloid constituted from the $e$ blocks in the set $\mathcal{B}_i \cup \mathcal{B}_j$. Then, $<t_k> \cong <t_i \otimes t_j>$ with the isomorphism $\psi: <t_k> \to <t_i \otimes t_j>$ given by $g \circ t_k \mapsto g \circ t_i \otimes g \circ t_j$.
\end{proof}

\smallskip

\noindent \textbf{Example 3.7. (cont.)} For $t_i$'s and $s_i$'s as in Example \ref{myexample}, we see that $$<t_1>\otimes <t_1> \cong <t_1> \otimes <t_2> \cong 3<t_1>$$
and $$<t_1> \otimes <t_3> \cong <s_1>$$
and so on.
\smallskip

\begin{cor} \label{whenproj} Let $\{t_{\alpha}\}_{\alpha \in \mathcal{I}}$ be a collection of tabloids constituted from the blocks $\{\mathcal{B}_{\alpha}\}_{\alpha \in \mathcal{I}}$, where $\mathcal{B}_{\alpha} \subset \{B_1,...,B_k\}$ for every $\alpha \in \mathcal{I}$ for any indexing set $\mathcal{I}$ such that $$\bigcup_{\alpha \in \mathcal{I}} \mathcal{B}_{\alpha}=\{B_1,...,B_k\}.$$ Then, $$<\bigotimes_{\alpha \in \mathcal{I}} t_{\alpha}>$$ is a projective $\mathbf{k}P$-module.
\end{cor}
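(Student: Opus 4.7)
The plan is to show that $\left\langle \bigotimes_{\alpha \in \mathcal{I}} t_\alpha \right\rangle$ has dimension exactly $p^k = |P|$, for then, being a cyclic quotient of the regular module $\mathbf{k}P$ of the same dimension as $\mathbf{k}P$, it must equal $\mathbf{k}P$, which is a free (hence projective) $\mathbf{k}P$-module. Write $Z := \bigotimes_{\alpha \in \mathcal{I}} t_\alpha$. The natural surjection $\pi : \mathbf{k}P \twoheadrightarrow \langle Z \rangle$, $g \mapsto g \cdot Z$, identifies each one-dimensional subspace of $\langle Z \rangle$ with a coset of the signed stabilizer $\mathrm{Stab}_\pm(Z) := \{g \in P : g\cdot Z = \pm Z\}$, so $\dim\langle Z \rangle = |P| / |\mathrm{Stab}_\pm(Z)|$, and it suffices to show $\mathrm{Stab}_\pm(Z) = \{e\}$.

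Since distinct tabloid-tensors are linearly independent in $M^{\otimes|\mathcal{I}|}$, the condition $g\cdot Z = \pm Z$ is equivalent to $g \circ t_\alpha = \pm t_\alpha$ for every $\alpha \in \mathcal{I}$. Hence $\mathrm{Stab}_\pm(Z) = \bigcap_{\alpha} \mathrm{Stab}_\pm(t_\alpha)$, and the problem reduces to computing the signed stabilizer of a single $t_\alpha$.

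For $t_\alpha$ constituted by $\mathcal{B}_\alpha$, the inclusion $\langle \tau_j : j \notin \mathcal{B}_\alpha \rangle \subseteq \mathrm{Stab}_\pm(t_\alpha)$ is immediate from the definition of ``constituted by''. For the reverse inclusion, decompose $P = Q_\alpha \times R_\alpha$ with $Q_\alpha = \langle \tau_j : j \in \mathcal{B}_\alpha\rangle$ and $R_\alpha = \langle \tau_j : j \notin \mathcal{B}_\alpha\rangle$. By Lemma \ref{indecomp}, $\langle t_\alpha\rangle_{Q_\alpha} \cong \mathbf{k}Q_\alpha$ is a free $\mathbf{k}Q_\alpha$-module of rank one; since distinct group elements of $Q_\alpha$ (and their negatives) are linearly independent in $\mathbf{k}Q_\alpha$, the group $Q_\alpha$ acts freely on $t_\alpha$ even up to sign. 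Writing a general $g \in P$ as $g = q r$ with $q \in Q_\alpha$, $r \in R_\alpha$, we get $g \circ t_\alpha = \pm(q \circ t_\alpha)$, which equals $\pm t_\alpha$ only when $q = e$. Thus $\mathrm{Stab}_\pm(t_\alpha) = R_\alpha$.

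Intersecting over $\alpha$ in the elementary abelian group $P$,
$$\mathrm{Stab}_\pm(Z) = \bigcap_{\alpha \in \mathcal{I}} \langle \tau_j : j \notin \mathcal{B}_\alpha \rangle = \left\langle \tau_j : j \notin \bigcup_{\alpha \in \mathcal{I}} \mathcal{B}_\alpha \right\rangle = \{e\},$$
where the last equality is forced by the hypothesis $\bigcup_\alpha \mathcal{B}_\alpha = \{B_1,\ldots,B_k\}$. Hence $\dim\langle Z\rangle = p^k$ and $\langle Z\rangle \cong \mathbf{k}P$ is projective. The main subtlety is the stabilizer computation for a single $t_\alpha$, where Lemma \ref{indecomp} is essential to rule out any ``accidental'' stabilizer elements arising from nontrivial products of $\tau_j$'s with $j \in \mathcal{B}_\alpha$.
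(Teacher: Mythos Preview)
Your proof is correct and follows the same core strategy as the paper: show that $\dim\langle Z\rangle = p^k = |P|$, whence the cyclic module $\langle Z\rangle$ is forced to be $\mathbf{k}P$ and hence projective. The paper's proof is a two-line assertion that the dimension is $p^k$ (implicitly by iterating the preceding lemma, which gives $\langle t_i\otimes t_j\rangle \cong \langle t_\ell\rangle$ with $t_\ell$ constituted by $\mathcal{B}_i\cup\mathcal{B}_j$), whereas you give a more self-contained justification via the signed stabilizer computation; this is a stylistic rather than a mathematical difference.
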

\begin{proof}
If $$\bigcup_{\alpha \in \mathcal{I}} \mathcal{B}_{\alpha}=\{B_1,...,B_k\}.$$ Then, $$\operatorname{dim} <\bigotimes_{\alpha \in \mathcal{I}} t_{\alpha}> = p^k.$$ Hence it is a projective $\mathbf{k}P$-module.
\end{proof}
\smallskip

\section{\textbf{The Benson - Symonds invariant for $M_P$}}

In this section we calculate the Benson - Symonds invariant for $M_P$. We know from Remark \ref{count} that it is enough to count the number of non-projective summands of the $core$ of higher tensor powers of $M_P$ or record the multiplicity of the summand that occurs with highest coefficient in each tensor power for sufficiently large $n$.\\

\noindent Let
\begin{equation}
core_P(M)=\bigoplus_{\alpha \in \mathcal{I}} A_\alpha \label{4.1}
\end{equation}
where each $A_\alpha$ is a non-projective indecomposable summand with $\operatorname{dim} A_\alpha = p^{d_\alpha}$ for some non-negative integer $d_\alpha$. $\mathcal{I}$ is an indexing set that counts the number of non-projective indecomposable summands of $M_P$. Let $\mathcal{B}_\alpha$ be the subset of $\{B_1,...,B_k\}$ containing the blocks that constitute the tabloids in $A_\alpha$. We have
$$core_P(M^{\otimes n}) = \bigoplus_{\nu \models n} \Bigg( {n \choose \nu} core \Big( \bigotimes_\alpha (A_\alpha)^{\otimes \nu _\alpha} \Big )\Bigg) $$
where ${n \choose \nu}$ is the multinomial coefficient and if $\nu_\alpha \geq 1$ then by Lemma \ref{dimen} $$(A_\alpha)^{\otimes \nu _\alpha}= p^{d_\alpha(\nu_\alpha-1)}\cdot A_\alpha.$$
Therefore for any $\mathcal{J} \subset \mathcal{I}$, with $w=|\mathcal{J}|$ the coefficient of $\bigotimes_{\alpha\in \mathcal{J}} A_\alpha$ is 
\begin{eqnarray}
\sum_{\substack {\nu \models n \\ \text{into w parts}}} {n \choose \nu} p^{\sum_{\alpha \in J} d_\alpha(\nu _\alpha -1)}. \label{4.2}
\end{eqnarray}

The coefficient above depends only on $w$ and $|\mathcal{B}_\alpha|=d_\alpha$. We know from Corollary \ref{whenproj} that $\bigotimes_{\alpha\in \mathcal{J}} A_\alpha$ is non-projective if and only if $$\bigcup_{\alpha \in \mathcal{J}}\mathcal{B}_\alpha \neq \{B_1,...,B_k\}.$$
In fact the coefficient will be maximum if $\mathcal{J}$ is chosen such that $$\bigcup_{\alpha \in \mathcal{J}}\mathcal{B}_\alpha =\{B_1,...,B_{k-1}\}$$
making (\ref{4.2}) sum over the largest possible set of compositions of $n$  while being a coefficient of a non-projective summand.\\

\noindent Let $J_0$ be the maximal possible subset of $\mathcal{I}$ such that 
\begin{center}
\begin{equation}
\bigcup_{\alpha \in J_0}\mathcal{B}_\alpha =\{B_1,...,B_{k-1}\}. \label{4.3}
\end{equation}
\end{center}

\begin{thm} \label{maxinvt} From Equation (\ref{4.1}) and following the notations used above, we have $$\gamma_P(M)=\sum_{\alpha \in J_0} p^{d_\alpha}$$
\end{thm}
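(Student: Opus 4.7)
The plan is to invoke Remark~\ref{count} to replace $c_n^P(M)$ by the multiplicity of a single well-chosen indecomposable summand, then extract the exponential growth rate from (\ref{4.2}) via the multinomial theorem. Setting $\mathcal{J} = J_0$, the relation (\ref{4.3}) together with the contrapositive of Corollary~\ref{whenproj} shows that $\bigotimes_{\alpha \in J_0} A_\alpha$ is non-projective; iterating Lemma~\ref{indecomp} and Lemma~\ref{dimen} shows that every one of its indecomposable constituents is isomorphic to a single module $A_{J_0}$ of dimension $p^{k-1}$ (any tabloid constituted by $\{B_1,\dots,B_{k-1}\}$ generates a representative). Since Remark~\ref{finitesummands} guarantees that only finitely many indecomposable isomorphism classes ever appear in tensor powers of $M_P$, Remark~\ref{count} lets us compute $\gamma_P(M)$ as the $n$-th-root limit of the multiplicity of $A_{J_0}$ in $M_P^{\otimes n}$, and this multiplicity is a fixed positive-integer multiple of the coefficient (\ref{4.2}) evaluated at $\mathcal{J}=J_0$ (the contributions of proper subsets $\mathcal{J} \subsetneq J_0$ will turn out to be of strictly smaller exponential order).

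Next I would evaluate (\ref{4.2}) with $\mathcal{J}=J_0$ asymptotically. Pulling out the common factor $p^{-\sum_{\alpha \in J_0} d_\alpha}$ and applying inclusion--exclusion to the set of vanishing coordinates in the restricted composition sum yields
$$\sum_{\substack{\nu \models n \\ \nu_\alpha \ge 1}} \binom{n}{\nu} \prod_{\alpha \in J_0} p^{d_\alpha \nu_\alpha} \;=\; \sum_{S \subseteq J_0} (-1)^{|S|} \Bigl(\sum_{\alpha \in J_0 \setminus S} p^{d_\alpha}\Bigr)^n.$$
Because every $p^{d_\alpha}>0$, the $S=\emptyset$ term has the strictly largest base, so the whole sum is asymptotic to a positive constant times $\bigl(\sum_{\alpha \in J_0} p^{d_\alpha}\bigr)^n$, and taking $n$-th roots gives the claimed value $\sum_{\alpha \in J_0} p^{d_\alpha}$.

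Finally I would verify optimality. For any admissible subset $\mathcal{J} \subseteq \mathcal{I}$ (i.e.\ one with $\bigcup_{\alpha \in \mathcal{J}} \mathcal{B}_\alpha \neq \{B_1,\dots,B_k\}$) the identical asymptotic analysis yields exponential growth rate $\sum_{\alpha \in \mathcal{J}} p^{d_\alpha}$, a quantity strictly monotone in $\mathcal{J}$; maximality of $J_0$ among admissible subsets in (\ref{4.3}) therefore caps this rate at $\sum_{\alpha \in J_0} p^{d_\alpha}$, so no other non-projective indecomposable summand of $M_P^{\otimes n}$ can outgrow $A_{J_0}$. The main obstacle will be the first step: one must confirm carefully that the multiplicity of the (possibly decomposable) tensor factor $\bigotimes_{\alpha \in J_0} A_\alpha$ really does feed into the multiplicity of the single indecomposable summand $A_{J_0}$ with a constant of proportionality independent of $n$, so that the hypotheses of Remark~\ref{count} are met and the reduction to (\ref{4.2}) is rigorous.
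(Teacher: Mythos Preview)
Your approach is essentially the same as the paper's: isolate the maximal non-projective tensor $\bigotimes_{\alpha\in J_0}A_\alpha$, invoke the robustness of Remark~\ref{count}, and read off the exponential growth rate $\sum_{\alpha\in J_0}p^{d_\alpha}$ from the multinomial expansion. Two small points are worth noting.

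First, your appeal to Remark~\ref{finitesummands} is misplaced: that remark concerns signed Young modules as $\mathbf{k}\mathcal{S}_n$-summands, not $\mathbf{k}P$-summands. The finiteness you actually need is much simpler: by Lemma~\ref{indecomp} and Corollary~3.10 every indecomposable summand that can ever appear is determined by a subset of $\{B_1,\dots,B_k\}$, so there are at most $2^k$ isomorphism types.

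Second, the paper sidesteps exactly the obstacle you flag at the end. Rather than tracking the multiplicity of a single indecomposable $A_{J_0}$ (which forces you to control the constant of proportionality and to handle the restriction $\nu_\alpha\geq 1$ via inclusion--exclusion), the paper observes that the coefficient of $N_P=\bigotimes_{\alpha\in J_0}A_\alpha$ in $M_P^{\otimes n}$ agrees with its coefficient in $\bigl(\bigoplus_{\alpha\in J_0}A_\alpha\bigr)^{\otimes n}$, and then uses Remark~\ref{count} a second time to replace that coefficient by the total number of non-projective indecomposable summands of $\bigl(\bigoplus_{\alpha\in J_0}A_\alpha\bigr)^{\otimes n}$. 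Since \emph{every} summand of this smaller tensor power is non-projective, the count is now a sum over \emph{weak} compositions (parts $\nu_\alpha\geq 0$ allowed), so the multinomial theorem gives $\bigl(\sum_{\alpha\in J_0}p^{d_\alpha}\bigr)^n$ directly up to a constant factor, with no inclusion--exclusion needed. Your inclusion--exclusion argument is perfectly correct and arguably more transparent about the error terms; the paper's device is simply a shortcut that dissolves the proportionality issue rather than confronting it.
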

\begin{proof}

Let $w=|J_0|$. Rearrange the indexing set $\mathcal{I}$ so that the first $w$ indices in $\mathcal{I}$ are those in $J_0$. 
Let
$$N_P=\bigotimes_{\alpha=1}^w A_\alpha.$$
Then, the multiplicity (or coefficient) of $N_P$ is maximum from Equation (\ref{4.2}) and Equation (\ref{4.3}). Recall from Remark \ref{count}, that 
the Benson - Symonds invariant for $M_P$ is equal to the limit of the $n$th root of the multiplicity of $N_P$ in higher tensor powers of $M_P$. Observe that the coefficient of $N_P$ in $(M_P)^{\otimes n}$ is the same as the coefficient of $N_P$ in $$\Bigg( \bigoplus_{\alpha=1}^w A_\alpha \Bigg)^{\otimes n}. $$
Therefore, the limit of the $n$th root of the multiplicity of $N_P$ in $(M_P)^{\otimes n}$ is the same as the limit of the $n$th root of the multiplicity of $N_P$ in $$\Bigg( \bigoplus_{\alpha =1}^w A_\alpha \Bigg)^{\otimes n} $$ which by Remark \ref{count} is the same as the limit of the $n$th root of the number of non-projective indecomposable summands in $$\Bigg( \bigoplus_{\alpha=1}^w A_\alpha \Bigg)^{\otimes n} $$
which is 
\begin{eqnarray*}
&=&\sum_{\substack {\nu \models n }} {n \choose \nu} p^{\sum_{\substack{ \alpha \in J_0\\ \nu_\alpha \neq 0} } d_\alpha(\nu _\alpha -1)}\\
&=&\sum_{\substack {\nu \models n }} {n \choose \nu} p^{\sum_{\alpha \in J_0} d_\alpha \nu _\alpha} \cdot p^{-\sum_{\alpha \in J_0} d_\alpha}\\
&=&\frac{1}{p^{\sum_{\alpha \in J_0} d_\alpha}}    \sum_{\substack {\nu \models n }} {n \choose \nu} p^{\sum_{\alpha \in J_0} d_\alpha \nu _\alpha} \\
&=&\frac{1}{p^{\sum_{\alpha \in J_0} d_\alpha}} \bigg (\sum_{\alpha \in J_0} p^{d_\alpha}\bigg )^n\\
\end{eqnarray*}
and 
\begin{eqnarray*}
\gamma_P(M)= \lim _{n \to \infty} \bigg (  \frac{1}{p^{\sum_{\alpha \in J_0} d_\alpha}}  \bigg)^{1/n} \sum_{\alpha \in J_0} p^{d_\alpha} =\sum_{\alpha \in J_0} p^{d_\alpha}. 
\end{eqnarray*}
\end{proof}
Recall that $p^{d_\alpha}$ is the dimension of $A_\alpha$ whose tabloids are constituted by $\mathcal{B}_\alpha \subset \{B_1,...,B_{k-1}\}$. Hence, the Benson - Symonds invariant is really the number of tabloids that are constituted by blocks other than the block $B_k$. \\

A $(\lambda|\mu)$-tabloid $t$ is constituted by blocks other than the block $B_k$ if and only if $\tau_k \circ t = \pm t$. This is possible if and only if all the entries of the block $B_k$ lie in any same row of $t$. So, we count the number of $(\lambda|\mu)$-tabloids such that the position of $p$ special entries is fixed in any of the $r+s$ rows of the $(\lambda|\mu)$-tabloid. It is easy to see that this number is equal to 
\smallskip

$${n-p \choose \lambda_1-p,...\lambda_r,\mu_1,...,\mu_s}+{n-p \choose \lambda_1,\lambda_2-p,...,\lambda_r, \mu_1,...,\mu_s}+...+{n-p \choose \lambda_1,...,\lambda_r,\mu_1,...,\mu_s-p}.$$
\smallskip

Hence, we have calculated the invariant for $M_P$:
\smallskip
$$\gamma_P(M^{(\lambda|\mu)})={n-p \choose \lambda_1-p,...\lambda_r,\mu_1,...,\mu_s}+{n-p \choose \lambda_1,\lambda_2-p,...,\lambda_r, \mu_1,...,\mu_s}+...+{n-p \choose \lambda_1,...,\lambda_r,\mu_1,...,\mu_s-p}.$$
\smallskip
\\
\noindent \textbf{Example 3.7. (cont.)} We now look back at our Example \ref{myexample}. For $M=M^{(\lambda|\mu)}$, where $(\lambda|\mu)=(3,1|3,2)$. $$core_P(M)=2<t_1>\oplus 2<t_3> \oplus 2<t_5>\oplus 12<s_1> \oplus 12<u_1> \oplus 12<v_1>. $$
Following (\ref{4.3}), we have $J_0=\{1,2\}\cup \{3,4\} \cup\{7,8,...,18\}$ and using Theorem \ref{maxinvt}
 \begin{eqnarray}
 \gamma_P(M)&=&2(3)+2(3)+12(9)=120 \notag \\
 &=&{6 \choose 0,1,3,2}+{6 \choose 3,-2,3,2}+ {6 \choose 3,1,0,2}+{6 \choose 3,1,3,-1} \notag \\
&=&60+0+60+0.    \label{4.4}
 \end{eqnarray}

\smallskip

\section{\textbf{The Benson - Symonds invariant for $M$}}

In this section we first look at the invariant for $M$ restricted to an arbitrary maximal elementary abelian $p$-subgroup of $\mathcal{S}_n$ and then use Theorem \ref{maxElt} to determine the invariant for $M$. Let $E$ be an arbitrary elementary abelian $p$-subgroup of $\mathcal{S}_n$.  It is clear that rank $E \leq k$. Using Theorem \ref{conjEltAb}, consider the partition of $\{1,2,...,n\}$ into orbits of $E$ and denote it by $C_1,...,C_m,C_{m+1},...,C_{m+a_0}$ where $|C_i|=p^{c_i}$ for some $c_i \in \mathbb{Z}_{>0}$ for $1 \leq i \leq m$ and $|C_i|=1$ for $i>m$.\\ 
$$core_E(M)=\bigoplus_{\alpha \in \mathcal{I}} A_\alpha$$
where each $A_\alpha$ is a cyclic non-projective indecomposable summand generated by a single tabloid with $\operatorname{dim} A_\alpha = p^{d_\alpha}$ for some non-negative integer $d_\alpha$ (same as the size of $E$-orbit of the tabloid). Let $\mathcal{C}_\alpha$ be the subset of $\{C_1,...,C_m\}$ containing the blocks that constitute the tabloids in $A_\alpha$.\\

Choose $1 \leq i_0 \leq m$ such that $c_{i_0}=\min \{c_i | 1 \leq i \leq m\}$. Let $J$ be the maximal possible subset of $\mathcal{I}$ such that $$\bigcup_{\alpha \in J}\mathcal{C}_\alpha =\{C_1,...,C_m\} \setminus \{C_{i_0}\}.$$
Then as in Theorem \ref{maxinvt}, we have 
$$\gamma_E(M)=\sum_{\alpha \in J} p^{d_\alpha}.$$
So $\gamma_E(M)$ is equal to the number of tabloids in $M$ that are constituted by the blocks other than the block $C_{i_0}$. This can at most be the number of tabloids in $M$ constituted by the blocks other than $B_k$ in the previous section because $|B_k| \leq |C_{i_0}|$.\\
Therefore, $$\gamma_E(M) \leq \gamma_P(M).$$
Hence, we have proved the following theorem:\\

\begin{thm} Let $M^{(\lambda|\mu)}$ be the signed-permutation module over the symmetric group $\mathcal{S}_n$ over a field of characteristic $p$ and let $P$ be an elementary abelian $p$-subgroup of $\mathcal{S}_n$ of highest possible rank. Then,
$$\gamma_{\mathcal{S}_n}(M^{(\lambda|\mu)})=\gamma_P(M^{(\lambda|\mu)}).$$
\end{thm}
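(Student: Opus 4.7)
The plan is to invoke Benson--Symonds's reduction to elementary abelian subgroups (Theorem \ref{maxElt}) and then show that among all maximal elementary abelian $p$-subgroups of $\mathcal{S}_n$, the one generated by $k$ disjoint $p$-cycles already realizes the maximum of $\gamma_E(M^{(\lambda|\mu)})$. By Theorem \ref{maxElt} combined with the preceding monotonicity lemma, it suffices to compute $\gamma_E(M^{(\lambda|\mu)})$ for each conjugacy class of maximal elementary abelian $p$-subgroups $E\leq \mathcal{S}_n$. Theorem \ref{conjEltAb} enumerates these classes by decompositions $n=a_0+\sum_j i_j p^j$, so I can fix such an $E$ and work with its orbit partition $C_1,\ldots,C_m,C_{m+1},\ldots,C_{m+a_0}$ of $\{1,\ldots,n\}$, where $|C_i|=p^{c_i}$ for $i\leq m$ and $|C_i|=1$ otherwise.

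Next I would rerun the machinery of Section 3 verbatim with the $C_i$'s in place of the $B_i$'s: the action of $E$ on $(\lambda|\mu)$-tabloids again partitions them into orbits, each $E$-orbit generates a cyclic $\mathbf{k}E$-summand of dimension $p^{d_\alpha}$ (where $d_\alpha$ is the number of $C_i$'s that ``constitute'' the generator), and these summands are indecomposable by the same Lemma \ref{indecomp}-style argument since $E$ splits as a direct product across its orbits. The tensor-product identities (Lemma \ref{dimen} and Corollary \ref{whenproj}) go through unchanged, so the proof of Theorem \ref{maxinvt} applies and gives
\[
\gamma_E(M^{(\lambda|\mu)})=\sum_{\alpha\in J} p^{d_\alpha},
\]
where $J$ is the maximal subset of indices whose block-unions miss exactly one chosen orbit $C_{i_0}$. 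To maximize this count I should drop the smallest orbit, so I take $c_{i_0}=\min_i c_i$; then $\gamma_E(M^{(\lambda|\mu)})$ equals the number of $(\lambda|\mu)$-tabloids in which the $p^{c_{i_0}}$ entries of $C_{i_0}$ all land in a common row.

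The comparison step is the crux. Since $P$ is generated by $k$ disjoint $p$-cycles, its smallest nontrivial orbit has size $p$, while any other maximal elementary abelian $p$-subgroup $E$ has $|C_{i_0}|\geq p$ with equality only when $E$ already contains a $P$-type factor. Consequently the constraint ``all $|C_{i_0}|$ entries of a fixed $C_{i_0}$ lie in one row'' is more restrictive than ``all $p$ entries of $B_k$ lie in one row'', so the tabloid count drops:
\[
\gamma_E(M^{(\lambda|\mu)})\;\leq\;\gamma_P(M^{(\lambda|\mu)}).
\]
Taking the max over $E$ and applying Theorem \ref{maxElt} yields the equality claimed.

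The main obstacle I expect is verifying the inequality $\gamma_E(M)\leq \gamma_P(M)$ cleanly when $|C_{i_0}|>p$: one must argue combinatorially that fixing a larger block inside a single row of shape $(\lambda_1,\ldots,\lambda_r,\mu_1,\ldots,\mu_s)$ produces no more placements than fixing a size-$p$ block, which follows because for each row of length $\ell\geq |C_{i_0}|$ the remaining multinomial coefficient satisfies $\binom{n-|C_{i_0}|}{\ldots,\ell-|C_{i_0}|,\ldots}\leq \binom{n-p}{\ldots,\ell-p,\ldots}$ (summed over rows, with the convention that rows of length $<|C_{i_0}|$ contribute zero). Everything else is a repetition of the arguments already developed in Sections 3 and 4, so the structural content is entirely in this final size comparison.
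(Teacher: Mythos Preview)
Your proposal is correct and follows essentially the same line as the paper's own argument: reduce via Theorem~\ref{maxElt} to maximal elementary abelian $p$-subgroups, rerun the Section~3--4 analysis with the $E$-orbits $C_i$ in place of the $B_i$, pick $C_{i_0}$ of minimal size, and observe that since $|C_{i_0}|\geq p=|B_k|$ the tabloid count for $E$ is bounded above by that for $P$. The paper states the final size comparison in one sentence, whereas you spell out the multinomial-coefficient inequality; otherwise the two arguments are the same.
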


\noindent This completes the proof of Theorem \ref{myresult}.\\
\\
\noindent \textbf{Example 3.7. (cont.)} We can now determine the invariant for $M$ as a $\mathbf{k}\mathcal{S}_n$-module. From Theorem \ref{conjEltAb} we know that $E=<(1,2,3)(4,5,6)(7,8,9),(1,4,7)(2,5,8)(3,6,9)>$ is the only other maximal elementary abelian 3-subgroup of $\mathcal{S}_n$. $C_1=\{1,...,9\}$ is a single $E$-orbit, hence for any $(\lambda|\mu)$-tabloid $t$, the cyclic $\mathbf{k}E$-module generated by $t$ is 9-dimensional which is a projective $\mathbf{k}E$-module since $|E|=9$. Therefore $$core_E(M)=\varnothing.$$
This implies that $\gamma_E(M)=0$ and hence $\gamma_{\mathcal{S}_n}(M)=	120$ from (\ref{4.4}).\\

\smallskip

\section{\textbf{Further Directions}}

In this section we include some observations and further questions. The following theorem is a consequence of the work above:\\
\begin{thm} \label{thmdim} Let $M^{(\lambda|\mu)}$ be the signed permutation module of the symmetric group $\mathcal{S}_n$ over a field of characteristic $p$. Let $n=kp+a_0$ with $a_0<p$ and $P$ be a rank $k$ elementary abelian $p$-subgroup of $\mathcal{S}_n$ generated by $k$ different $p$-cycles. Let $M_P$ denote the restriction of $M^{(\lambda|\mu)}$ to $P$. We write $M_P=N \oplus Q$, where $N$ is maximal possible summand of $M_P$ which is not $p$-faithful. Then, $\gamma_{\mathcal{S}_n}(M^{(\lambda|\mu)})$ is equal to the dimension of $N$.
\end{thm}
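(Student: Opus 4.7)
The plan is to identify the summand $N$ of the statement with the submodule $\bigoplus_{\alpha \in J_0} A_\alpha$ already appearing in Theorem \ref{maxinvt}, and then to conclude by combining Theorem \ref{maxinvt} with the identity $\gamma_{\mathcal{S}_n}(M^{(\lambda|\mu)}) = \gamma_P(M^{(\lambda|\mu)})$ proved at the end of Section 5.

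Starting from the Krull--Schmidt decomposition $M_P = \bigoplus_{\alpha \in \mathcal{I}} A_\alpha$ produced in Section 3, I would first record (as follows from Lemma \ref{indecomp}) that the kernel of the $P$-action on the indecomposable summand $A_\alpha \cong \langle t_\alpha\rangle$ is the subgroup $\langle \tau_j : B_j \notin \mathcal{B}_\alpha \rangle$ generated by the $p$-cycles whose associated blocks do not constitute $t_\alpha$. Consequently, for any $S \subseteq \mathcal{I}$, the direct sum $\bigoplus_{\alpha \in S} A_\alpha$ fails to be $p$-faithful exactly when $\bigcup_{\alpha \in S} \mathcal{B}_\alpha \subsetneq \{B_1, \ldots, B_k\}$. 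To maximize the dimension of such a non-faithful summand one must omit at least one block from every $\mathcal{B}_\alpha$ appearing in $S$, and by the symmetry among the generators $\tau_1, \ldots, \tau_k$ the omitted block can be chosen to be $B_k$. This identifies $N \cong \bigoplus_{\alpha \in J_0} A_\alpha$ with $J_0 = \{\alpha \in \mathcal{I} : B_k \notin \mathcal{B}_\alpha\}$, which (using Remark \ref{construct} to ensure summands constituted by each individual $B_j$ with $j<k$ actually occur in $M_P$) is exactly the set appearing in Theorem \ref{maxinvt}.

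With this identification in place, Theorem \ref{maxinvt} gives $\gamma_P(M^{(\lambda|\mu)}) = \sum_{\alpha \in J_0} p^{d_\alpha} = \dim N$, and the Section 5 identity then yields $\gamma_{\mathcal{S}_n}(M^{(\lambda|\mu)}) = \dim N$ as required. The only step that needs explicit justification is the maximality claim: any $S$ for which $\bigcup_{\alpha \in S} \mathcal{B}_\alpha$ omits more than one block must strictly exclude some summands $A_\alpha$ whose constituting set contains a missing block, and hence has strictly smaller total dimension, while all single-block-omitting choices produce sums of equal dimension by the evident symmetry among the blocks $B_j$.
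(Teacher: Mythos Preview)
Your proof is correct. Both your argument and the paper's reach $\gamma_P(M)=\dim N$ and then invoke the Section~5 identity $\gamma_{\mathcal{S}_n}(M)=\gamma_P(M)$, but the routes to the first equality differ. The paper argues abstractly from the definition of $N$: since $N$ is not $p$-faithful there is some $\tau\in P$ of order $p$ acting trivially on $N$, so no tensor power of $N$ has a projective summand and $\gamma_P(N)=\dim N$; conversely, every summand $Q_i$ of $Q$ can be tensored against suitable summands $N_j$ of $N$ so that $\bigotimes_j N_j\otimes Q_i$ is $p$-faithful and hence projective, forcing the $Q$-contribution to vanish in the asymptotics and giving $\gamma_P(M)=\dim N$. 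You instead identify $N$ concretely as $\bigoplus_{\alpha\in J_0}A_\alpha$ via the kernel computation and the block symmetry, and then simply read off $\dim N=\sum_{\alpha\in J_0}p^{d_\alpha}=\gamma_P(M)$ from Theorem~\ref{maxinvt}. Your route is more economical in that it reuses the limit computation already carried out, while the paper's route is self-contained and makes transparent \emph{why} the $Q$-part is asymptotically negligible without appealing back to Theorem~\ref{maxinvt}.
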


\begin{proof}
$N$ is not $p$-faithful, therefore $\gamma_{P}(N)=\operatorname{dim} N \leq \gamma_{P}(M)$. Let $\tau \in P$ be an element of order $p$ that acts trivially on $N$. Since, $N$ is maximal summand which is not $p$-faithful, we know that $\tau$ acts non-trivially on every summand of $Q$. For every $Q_i$, a summand of $Q$ there exists $\{N_j\}$, a collection of summands of $N$, such that 
$$\otimes_{j}N_j \otimes Q_i$$ is $p$-faithful and hence a projective $\mathbf{k}P$-module. This term will appear in a sufficiently large tensor power of $M_P$. So in the study of the asymptotics of the direct sum decomposition of tensor powers of $M_P$, the contribution of each summand of $Q$ eventually vanishes. Hence $\gamma_{P}(M)=\operatorname{dim} N $. \\

To determine $\gamma_{\mathcal{S}_n}(M^{(\lambda|\mu)})$ we take the maximum over all elementary abelian $p$-subgroups. We saw in Section 5 that the maximum is attained at $P$. Hence, $\gamma_{\mathcal{S}_n}(M^{(\lambda|\mu)})=\dim N$.

\end{proof}

\begin{cor} \label{exttrivial} If $Y^{(\lambda|p \cdot \nu)}$ is the signed Young module corresponding to partitions $\lambda$ and $\nu$ then $$\gamma_{\mathcal{S}_n}(Y^{(\lambda|p\cdot\nu)})= \max_E \dim N_E $$ where $E$ is a maximal elementary abelian $p$-subgroup of $\mathcal{S}_n$ and $Y^{(\lambda|p\cdot\nu)}_E=N_E \oplus Q_E$ with $Y^{(\lambda|p\cdot\nu)}_E$ being the restriction of $Y^{(\lambda|p\cdot\nu)}$ to $E$ and $N_E$ being the maximal possible summand of  $Y^{(\lambda|p\cdot\nu)}_E$ which is not $p$-faithful.
\end{cor}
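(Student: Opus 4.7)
The plan is to adapt the proof of Theorem \ref{thmdim} to the signed Young module $Y = Y^{(\lambda|p\cdot\nu)}$. By Theorem \ref{maxElt} together with the monotonicity of $\gamma$ under subgroup inclusion, we have $\gamma_{\mathcal{S}_n}(Y) = \max_E \gamma_E(Y)$, where the maximum ranges over the maximal elementary abelian $p$-subgroups $E$ of $\mathcal{S}_n$. It therefore suffices to prove $\gamma_E(Y) = \dim N_E$ for each such $E$.

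The structural claim I would establish is that every indecomposable summand of $Y_E$ is either projective or not $p$-faithful, so that $Q_E$ is projective and $N_E = core_E(Y)$. By Remark \ref{finitesummands}, $Y$ is an indecomposable summand of some signed permutation module $M^{(\alpha|\beta)}$, hence $Y_E$ is a direct summand of $M^{(\alpha|\beta)}_E$. The analyses of Section 3 and Section 5 describe the indecomposable summands of $M^{(\alpha|\beta)}_E$ as cyclic modules $\langle t \rangle$ indexed by tabloids, with $\dim \langle t \rangle$ equal to the product of the sizes of the non-trivial $E$-orbit blocks that constitute $t$. Such a $\langle t \rangle$ fails to be $p$-faithful exactly when some non-trivial $E$-orbit block is omitted by $t$, in which case the order-$p$ elements supported on that block annihilate $\langle t \rangle$; when every non-trivial $E$-orbit block is used, $\dim \langle t \rangle = |E|$ and $\langle t \rangle$ is projective. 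By Krull--Schmidt, the indecomposable summands of $Y_E$ are isomorphic to a subcollection of these $\langle t \rangle$'s, so each is either projective or not $p$-faithful, as claimed.

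Granting the structural claim, the argument from Theorem \ref{thmdim} transfers directly. Since $N_E$ is not $p$-faithful, some order-$p$ element $\tau \in E$ annihilates $N_E$ and therefore every tensor power $N_E^{\otimes n}$; no summand of $N_E^{\otimes n}$ can then be $\mathbf{k}E$-projective, since its restriction to $\langle \tau \rangle$ would have to be trivial and free simultaneously. Hence $c_n^E(N_E) = (\dim N_E)^n$ and $\gamma_E(N_E) = \dim N_E$. Since $Q_E$ is projective, $\gamma_E(Y) = \gamma_E(N_E) = \dim N_E$, and taking the maximum over $E$ yields the corollary. The main obstacle is the structural claim: verifying that the cyclic-summand description of $M^{(\alpha|\beta)}_E$ from Sections 3 and 5 applies uniformly to every maximal elementary abelian $E$, and that Krull--Schmidt cleanly transfers the projective-versus-non-$p$-faithful dichotomy from $M^{(\alpha|\beta)}_E$ to the summand $Y_E$. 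Once this is in hand, the rest of the argument is immediate.
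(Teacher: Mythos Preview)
The paper states this corollary without proof, so the intended argument is presumably the one you give: adapt the proof of Theorem \ref{thmdim} via Theorem \ref{maxElt}, using that $Y$ is a summand of some $M^{(\alpha|\beta)}$. Your structural claim---that every indecomposable summand of $Y_E$ is either projective or not $p$-faithful, by Krull--Schmidt from the cyclic decomposition of $M^{(\alpha|\beta)}_E$---is correct and is the right starting point.

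The gap is the sentence ``so that $Q_E$ is projective and $N_E = core_E(Y)$.'' You are implicitly identifying the \emph{maximal not-$p$-faithful summand} $N_E$ with the direct sum of all not-$p$-faithful indecomposable summands. These need not coincide: two indecomposables $A,B$ can each be not $p$-faithful (say $\tau_1$ kills $A$ and $\tau_2$ kills $B$) while $A\oplus B$ is $p$-faithful, because no single order-$p$ element kills both. In that situation $N_E$ can contain $A$ but not $B$, so $B$ sits in $Q_E$ and $Q_E$ is not projective. Thus the structural dichotomy at the level of indecomposables does not yield $Q_E=(\text{projective part})$, and your final paragraph's use of ``$Q_E$ projective'' is unjustified.

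Note that the paper's proof of Theorem \ref{thmdim} does \emph{not} claim $Q$ is projective; it argues instead that each summand $Q_i$ of $Q$ becomes projective after tensoring with suitable summands of $N$ (since together they then cover all blocks $B_1,\dots,B_k$), so the $Q_i$ contribute nothing asymptotically. That step uses the explicit knowledge that $N$ contains summands constituted from every block except $B_k$, which is clear for the full $M^{(\lambda|\mu)}$ but not automatic for the Young-module summand $Y_E$: one must check that $N_E$ still contains enough cyclic pieces to cover the remaining blocks of each $Q_i$. Your proposal flags the structural claim as ``the main obstacle,'' but the genuine obstacle is this covering step, which you have replaced by the incorrect shortcut $Q_E$ projective.
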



Since Young modules are indecomposable summands of ordinary Permutation modules, we can try to find a recursive formula for the invariant of Young modules using the knowledge of the invariant for Permutation modules and the inequalities relating the invariant of Young modules and Permutation module.

The next class of modules which is extensively studied in the representation theory of symmetric groups is of Specht modules $S^\lambda$. The main obstruction to our method in the case of Specht modules is that the indecomposable summands of tensor products of Specht modules is not known. If we have the knowledge of the number of composition factors of higher tensor powers of $S^\lambda$ or even the number of composition factors of the socle of higher tensor powers of $S^\lambda$, there is some hope to work out the invariant.\\ 

\noindent The next theorem is a compilation of some quick observations:\\  

\begin{thm} Suppose $\lambda \vdash n$. Then we have the following results:
\begin{enumerate}
\item If $\lambda$ is $p$-restricted then $\gamma_{\mathcal{S}_n} (Y^\lambda)=0$.
\item If $\lambda=(n-1,1)$ then $\gamma_{\mathcal{S}_n}(Y^\lambda)=\operatorname{dim} Y^\lambda -p$.
\item Let $p=2$ and $\lambda=(p,p)$. Then $\gamma_{\mathcal{S}_n} (Y^\lambda)=2$.
\item Let $p>2$ and $\lambda=(p+a,p)$ for $0 \leq a <p$. Then $\gamma_{\mathcal{S}_n} (Y^\lambda)=1$.
\item Let $\lambda$ be a hook-shaped partition of $n$ with $p$-weight 0 or 1. Then, $\gamma_{\mathcal{S}_n} (S^\lambda)$ is equal to the $p$-weight of $\lambda$.\\
\end{enumerate}
\end{thm}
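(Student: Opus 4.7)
The plan is to treat the five parts separately, relying on Corollary \ref{exttrivial} for the Young-module cases and on block-theoretic input for the Specht-module case. For Part (1), when $\lambda$ is $p$-restricted the Young module $Y^\lambda$ is a projective indecomposable $\mathbf{k}\mathcal{S}_n$-module (the projective cover of the simple $D^{\lambda'}$). Hence $Y^\lambda_E$ is projective for every elementary abelian $p$-subgroup $E$, and because the regular representation $\mathbf{k}E$ is $p$-faithful, $N_E = 0$ for all $E$; Corollary \ref{exttrivial} then gives $\gamma_{\mathcal{S}_n}(Y^\lambda) = 0$.

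Parts (2)--(4) will proceed by a common template: the ambient permutation module $M^\lambda$ contains $Y^\lambda$ along with finitely many higher Young modules $Y^\mu$ for $\mu \rhd \lambda$; Section 3 gives an explicit tabloid-orbit decomposition of $M^\lambda_P$, and I would subtract off the contributions of the other $Y^\mu$'s to isolate $Y^\lambda_P$ and then apply the method of Theorem \ref{thmdim} to compute $\dim N_P$. For Part (2), Young's rule gives $M^{(n-1,1)} = Y^{(n-1,1)} \oplus \epsilon \cdot k$ with $\epsilon = 1$ if $p \nmid n$ and $\epsilon = 0$ otherwise, so after subtraction $Y^{(n-1,1)}_P$ has $k$ summands of dimension $p$ and $a_0 - \epsilon$ trivial summands, yielding $\dim N_P = (k-1)p + (a_0 - \epsilon) = \dim Y^{(n-1,1)} - p$. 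For Part (3), I would verify that $M^{(2,2)}$ is indecomposable in characteristic two (for instance by showing its socle is the simple $D^{(3,1)}$), so $Y^{(2,2)} = M^{(2,2)}$ and the Section 3 calculation gives $\dim N_P = 2$. For Part (4), the trivial summand $Y^{(2p+a)} = k$ splits off with multiplicity one from $M^{(p+a,p)}$ (using $\binom{2p+a}{p} \not\equiv 0 \pmod{p}$ by Lucas's theorem), while the other two-row Young modules $Y^{(2p+a-j,j)}$ for $0<j<p$ collectively absorb all the dimension-$p$ single-block summands of $M^{(p+a,p)}_P$, leaving $\dim N_P = 1$.

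For Part (5), if the $p$-weight of $\lambda$ is $0$ then $\lambda$ is its own $p$-core and $S^\lambda$ lies in a block of defect zero, hence is projective simple, so $\gamma_{\mathcal{S}_n}(S^\lambda) = 0$. If the $p$-weight is $1$ then $S^\lambda$ lies in a block with cyclic defect group of order $p$; restriction to any elementary abelian $p$-subgroup of rank $\geq 2$ is projective, and for a rank-$1$ subgroup $P = \langle \tau \rangle$ generated by a $p$-cycle I would compute the Jordan-block decomposition of $S^\lambda_P$ directly using the hook description $S^\lambda = S^{(m,1^{n-m})}$ (as an exterior power of the standard $(n-1)$-dimensional module) to find exactly one trivial summand, giving $\gamma_{\mathcal{S}_n}(S^\lambda) = 1$.

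The main obstacle will be the bookkeeping in Part (4): I must verify that after removing the Young summands $Y^{(2p+a)}$ and $Y^{(2p+a-j,j)}$ from $M^{(p+a,p)}$, the remaining non-$p$-faithful contribution to $Y^{(p+a,p)}_P$ is exactly one trivial summand. This requires either an explicit determination of the relevant $p$-Kostka numbers or an induction down the dominance order combined with the dimension identity coming from Theorem \ref{myresult}. Part (5) in $p$-weight one presents an analogous difficulty: establishing that $\dim N_P = 1$ for hook-shaped Specht modules needs either a polytabloid-basis computation or an appeal to the Brauer tree of the defect-$1$ block.
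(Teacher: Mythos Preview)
Your treatment of Parts~(1)--(3) matches the paper's, except that for Part~(2) the paper applies the Benson--Symonds identity $\gamma_G(\mathbf{k}\oplus N)=1+\gamma_G(N)$ directly to the value $\gamma_{\mathcal{S}_n}(M^{(n-1,1)})=n-p$ from Theorem~\ref{myresult}, rather than recounting tabloid orbits for $Y^{(n-1,1)}_P$. The real divergence is in Parts~(4) and~(5), where the paper avoids computation entirely by citing structural results: for~(4), $Y^{(p+a,p)}$ is endotrivial by Carlson--Mazza--Nakano, and since $\gamma_G(M)=1$ if and only if $M$ is endotrivial the conclusion is immediate; for~(5), Lim's theorem states that the complexity of a hook Specht module equals its $p$-weight, and the paper then uses that $c_G(M)=\gamma_G(M)$ whenever $M$ is projective or endotrivial. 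These two citations dissolve precisely the obstacles you single out --- the $p$-Kostka bookkeeping in~(4) and the Jordan-block analysis of the restricted exterior power in~(5) --- so while your hands-on route is not wrong in principle, it is working much harder than necessary. Note also that in~(5) your proposed identification of $S^{(m,1^{n-m})}$ with an exterior power of the standard module is delicate in positive characteristic, so the direct computation you outline would itself require care that the complexity citation sidesteps.
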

\begin{proof}
The authors in \cite{B_S} state that the invariant $\gamma_G(M)$ is similar in nature to the \textit{complexity} $c_G(M)$ of a module, which describes the polynomial rate of growth of a minimal resolution of $M$, or equivalently the rate of growth of the non-projective part of $M \otimes \Omega^n(M)$. In particular, $\gamma_G(M)=0$ if and only if $M$ is a projective $\mathbf{k}G$-module and $\gamma_G(M)=1$ if and only if $M$ is endotrivial. We know that the same is true for $c_G(M)$. Since $Y^\lambda$ is projective if and only if $\lambda$ is $p$-restricted from \cite{KErdmannYoung}, \textit{(1)} follows. \\
We know that corresponding to the partition $\lambda=(n-1,1)$,  
$$M^\lambda=\left\{\begin{array}{lr}
        Y^\lambda, &  p\mid n\\
        \mathbf{k} \oplus Y^\lambda , & p \nmid n\\
        \end{array}\right.$$ 
and $\gamma_{\mathcal{S}_n}(M^\lambda)=n-p$ from Theorem \ref{myresult}. We also have from \cite{B_S} that $\gamma_G(\mathbf{k} \oplus N)=1+\gamma_G(N)$. Hence, \textit{(2)} follows.\\
\textit{(3)} follows from the fact that $M^{(2,2)} \cong Y^{(2,2)}$.\\
Let $p>2$ and $\lambda=(p+a,p)$ for $0 \leq a <p$. Then we know from \cite[Proposition 8.2]{carl_m_nak_endo} that $Y^\lambda$ is endotrivial. Hence \textit{(4)} follows.\\
If $\lambda$ is a hook-shaped partition, then we know from \cite[Theorem 1.3]{lim2009complexity} that the complexity of the Specht module $S^\lambda$ is exactly the $p$-weight of $\lambda$. Since $c_G(M)=\gamma_G(M)$ when $M$ is projective or endotrivial, \textit{(5)} follows.
     
\end{proof} 

Theorem \ref{thmdim} and Corollary \ref{exttrivial} suggest that the next class of modules to consider could be that of \textit{trivial source modules}. 
\begin{thm} If $M$ is a trivial source module of a finite group $G$, then $\gamma_G(M)$ is an integer.
\end{thm}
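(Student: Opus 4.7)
The plan is to reduce, via Theorem~\ref{maxElt}, to showing $\gamma_E(M) \in \mathbb{Z}$ for each elementary abelian $p$-subgroup $E \leq G$, and then to run a generalization of the argument used in Theorem~\ref{thmdim}. Because being trivial source is preserved under restriction (a summand of a permutation module restricts to a summand of a permutation module by Mackey), $M \downarrow_E$ is a $p$-permutation $\mathbf{k}E$-module. Since $E$ is an abelian $p$-group in characteristic $p$, every indecomposable $p$-permutation $\mathbf{k}E$-module has the form $\mathbf{k}[E/Q] = \operatorname{Ind}_Q^E \mathbf{k}$ for some $Q \leq E$, so I may write
$$M \downarrow_E \;=\; \bigoplus_{Q \leq E} m_Q \cdot \mathbf{k}[E/Q]$$
for non-negative integers $m_Q$.

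Next I would analyze the tensor product structure. Since $E$ is abelian, iterated Mackey gives $\mathbf{k}[E/Q_1] \otimes \cdots \otimes \mathbf{k}[E/Q_n] \cong c \cdot \mathbf{k}[E/(Q_1 \cap \cdots \cap Q_n)]$ for a positive integer $c$, and this summand is projective on $E$ if and only if $\bigcap_i Q_i = 1$. For each order-$p$ element $\sigma \in E$ define
$$N_\sigma \;:=\; \bigoplus_{Q \ni \sigma} m_Q \cdot \mathbf{k}[E/Q],$$
the maximal direct summand of $M \downarrow_E$ on which $\sigma$ acts trivially, and let $N_E$ be one of largest dimension among the $N_\sigma$. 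This plays the role of the maximal non-$p$-faithful summand in Theorem~\ref{thmdim}.

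For the lower bound $\gamma_E(M) \geq \dim N_E$: since $\sigma$ acts trivially on every $N_E^{\otimes n}$, no indecomposable summand of $N_E^{\otimes n}$ can be $\mathbf{k}E$-projective, so $\operatorname{core}_E(N_E^{\otimes n}) = N_E^{\otimes n}$ and $\gamma_E(N_E) = \dim N_E$; the monotonicity of $\gamma_E$ under direct summands (a summand's tensor powers sit inside those of $M \downarrow_E$) then gives $\gamma_E(M) \geq \dim N_E$. For the upper bound, a direct computation yields
$$\dim \operatorname{core}_E\bigl((M \downarrow_E)^{\otimes n}\bigr) \;=\; \sum_{\substack{(Q_1, \ldots, Q_n)\\ \bigcap_i Q_i \neq 1}} \prod_{i=1}^n m_{Q_i} [E : Q_i].$$
Each such tuple lies entirely in $\{Q : \sigma \in Q\}^n$ for some nontrivial $\sigma \in \bigcap_i Q_i$, so the right-hand side is bounded by $\sum_\sigma (\dim N_\sigma)^n \leq (|E| - 1)(\dim N_E)^n$. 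Taking $n$-th roots and letting $n \to \infty$ gives $\gamma_E(M) \leq \dim N_E$.

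Combining both inequalities yields $\gamma_E(M) = \dim N_E \in \mathbb{Z}$, and Theorem~\ref{maxElt} then gives $\gamma_G(M) = \max_E \dim N_E \in \mathbb{Z}$. The main obstacle I anticipate is the upper bound: one needs both the explicit Mackey-type formula for the tensor product of permutation modules on an abelian $p$-group, and the structural observation that every non-projective summand of $(M \downarrow_E)^{\otimes n}$ factors through some proper quotient $E/\langle \sigma \rangle$, which is what legitimizes the clean estimate by $\sum_\sigma (\dim N_\sigma)^n$. Once this input is in place, the rest is an elementary exponential-growth argument generalizing Theorem~\ref{maxinvt} and Theorem~\ref{thmdim}.
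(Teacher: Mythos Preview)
Your proposal is correct and follows essentially the same strategy as the paper: reduce to elementary abelian $E$ via Theorem~\ref{maxElt}, decompose $M\downarrow_E$ as a sum of transitive permutation modules $\mathbf{k}[E/Q]$, and identify $\gamma_E(M)$ with the dimension of the largest summand on which some order-$p$ element acts trivially. The paper's proof is terser---it simply says ``following similar arguments as in the Proof of Theorem~\ref{thmdim}'' and invokes the phrase ``maximal possible summand which is not $p$-faithful''---whereas you spell out the Mackey formula $\mathbf{k}[E/Q_1]\otimes\cdots\otimes\mathbf{k}[E/Q_n]\cong c\cdot\mathbf{k}[E/\bigcap Q_i]$ and give an explicit upper bound $\dim\operatorname{core}_E(M^{\otimes n})\leq (|E|-1)(\dim N_E)^n$ by covering the non-projective tuples by the sets $\{Q:\sigma\in Q\}^n$; this makes your write-up more self-contained, but the content is the same.
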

\begin{proof}
Let $M$ be a trivial source module of a finite group $G$. Let $P$ be a Sylow $p$-subgroup of $G$. Therefore, the restriction of $M$ to $P$ is a permutation module. Let $\mathcal{B}$ be a basis for the restriction. Let $E$ be an elementary abelian $p$-subgroup of $P$. The action of $E$ on $\mathcal{B}$ partitions the set of basis vectors into $E$-orbits. If the action of $E$ on $\mathcal{B}$ is transitive then the restriction of $M$ to $E$ in projective and therefore $\gamma_E(M)=0$. Otherwise following similar arguments as in the Proof of Theorem \ref{thmdim}, we get that $\gamma_E(M)$ is equal to the dimension of the maximal possible summand on the restriction of $M$ to $E$ that is not $p$-faithful. Hence, $\gamma_E(M)$ is an integer. The theorem follows by taking the maximum over all elementary abelian $p$-subgroups and using Theorem \ref{maxElt}.
\end{proof}

We have therefore answered \cite[Question 6.2]{Aparna_BS}.\\


This brings us to our next question which could possible classify all $\mathbf{k}G$-modules for a finite group $G$ that have integer invariant.

\begin{quest} What can we say about a $\mathbf{k}G$-module $M$ whose $\gamma_G(M)$ is an integer?
\end{quest}

\smallskip
\smallskip

 \textbf{Acknowledgements:} I sincerely acknowledge the constant help and guidance of my advisor Prof. David J. Hemmer. I would like to thank him for a careful reading of this article and his valuable suggestions.\\
 \\
 
\bibliographystyle{plain}
\bibliography{bibliography}

\end{document}